\newcommand{\ind}{\{1,\dots,d\}}
\newcommand{\R}{\mathbb R}
\renewcommand{\L}{\mathbb L}
\newcommand{\T}{\mathbb T}
\newcommand{\LL}{\mathcal{L}}
\newcommand{\eps}{\varepsilon}
\newcommand{\uu}{\mathbf u}
\newcommand{\vv}{\mathbf v}
\newcommand{\HH}{\mathbb H}
\newcommand{\1}{\mathbbm1}
\newcommand{\e}{\textrm{\rm e}}
\newcommand{\tagliato}{$\kern-5 mm -$}
\newcommand{\tagliat}{$\kern-4 mm -$}
\newcommand{\D}[1]{\mbox{\rm #1}}
\newcommand{\dd}{\D{d}}
\newtheorem{teorema}{Theorem}[section]
\newtheorem{prop}[teorema]{Proposition}
\newtheorem{definition}[teorema]{Definition}
\newtheorem{cor}[teorema]{Corollary}
\newtheorem{guess}[teorema]{Remark}
\newtheorem{example}[teorema]{Example}
\newenvironment{dimo}{{\bf\noindent Proof.}}{\qed}
\newenvironment{oss}{\begin{guess} \begin{rm}}{\end{rm} \end{guess}}
\newenvironment{definizione}{\begin{definition} \begin{rm}}{\end{rm}
\end{definition}}
\begin{document}

\title{Twisted Lax--Oleinik formulas and weakly coupled systems of Hamilton--Jacobi equations}
\author{Maxime Zavidovique}
\address{
IMJ (projet Analyse Alg\' ebrique), UPMC,  
4, place Jussieu, Case 247, 75252 Paris C\' edex 5, France}
\email{zavidovique@math.jussieu.fr} \keywords{weakly coupled systems of Hamilton--Jacobi equations, viscosity solutions, weak KAM Theory}
\subjclass[2010]{35F21, 49L25, 37J50.}

\maketitle

\selectlanguage{frenchb}

\begin{abstract}
Nous d\'  emontrons que les solutions de viscosit\' e d'un syst\` eme faiblement coupl\' e d'\' equations d'Hamilton--Jacobi peuvent--\^ etre approch\' ees par des it\' erations d'op\' erateurs tordus \` a la Lax--Oleinik. On \' etablit la convergence vers la solution du sch\' ema it\' eratif et mettons en exergue quelques propri\' et\' es suppl\' ementaires des solutions approch\' ees.
\end{abstract}

\selectlanguage{english}

\begin{abstract}
We show that  viscosity solutions of evolutionary weakly coupled systems of Hamilton--Jacobi equations can be approximated by iterated twisted Lax--Oleinik like operators.
We establish convergence  to the solution of the iterated scheme and discuss further properties of the approximate solutions.
 \end{abstract}
%
%
\section*{Introduction}
Representation formulas for solutions of Hamilton--Jacobi equations with Tonelli Hamiltonians are the starting point of important theories studying the qualitative properties of the PDE and of the associated dynamical system. Of course, we have in mind Fathi's weak KAM theory which builds a bridge between solutions of the stationary equation (or cell problem) and Aubry-Mather theory which deals with action minimizing trajectories and measures.

Establishing such a dual point of view has led to striking results, let us mention, out of many others, two of them: the longtime convergence of solutions of the Hamilton--Jacobi  equation (see for example \cite{Fa1, DS}) and the convergence of solutions to the discounted equations (\cite{DFIZ, IS}). For both of those examples, purely PDE proofs were later on found (for instance in \cite{ BS,BIM,CGMT,IMT} and references therein).

A natural generalization of Hamilton--Jacobi equations are systems of Hamilton--Jacobi equations and more particularly, weakly coupled systems, meaning that the coupling only appears on the $0$ order terms. Ironically, weak KAM theory for those systems evolved backward compared to what happened for a single equation. The study of the critical equation was done first, from a purely PDE angle in \cite{DavZav14}, before the dynamical aspects were highlighted (\cite{SMT14, ISZ}). Recently, Lax--Oleinik formulas, combined with a random framework were studied for evolutionary equation in \cite{DSZ}. However deterministic approaches had been tried previously without success. 

The goal of this paper is to take those deterministic formulas as a starting point and see how to recover the solutions of the weakly coupled system from them. We expect the reader to have some familiarity with viscosity solutions, see \cite{barles_book} for an introduction on the subject.

\subsection{Acknowledgement} The author wishes  to thank A. Davini, with whom he started thinking about this problem, for his insight and for enriching conversations. This research was financed by ANR WKBHJ (ANR-12-BS01-0020).

The author thanks the anonymous referee for his helpful advise in  improving the presentation of the paper.

\section{Setting and main result}
We will consider $d$ Lagrangians on $\T^N \times \R^N$. They will be denoted by $L_i, 1\leqslant i \leqslant d$.

Moreover, for technical reasons, we will make a couple of assumptions on the growth of the $L_i$ and their derivatives. 

\begin{definizione}
In the following $\theta : \R_+\to \R_+$ is a function (called Nagumo function) such that
\begin{equation}\label{nagumo}
\forall M>0,\ \ \exists K_M>0 ,\ \  \forall m\leqslant M,\ \  \forall q\geqslant 0,\quad \theta(q+m)\leqslant K_M\big(1+\theta(q)\big). \tag{N}
\end{equation}

We will say that a function  $L : [0,T]\times \T^N\times \R^N$ is a \emph{good Lagrangian} if it verifies the following set of hypotheses
\begin{itemize}
\item[(L1)]\label{L1} the Lagrangian $L$ is a $C^1$ function  and  for all $(t,x)\in [0,T] \times \T^N$, $L(t,x,\cdot)$ is a strictly convex function;
\item[(L2)]\label{L2}
there exists  constants $c_0>0$ and $A>0$  such that 
\begin{align*}
\forall (t,x,v)\in [0,T]\times\T^N\times \R^N,\quad& L(t,x,v) \geqslant \theta(|v|)-c_0;\\
& |\partial_x L(x,v)| +|\partial_v L(x,v)| < A \theta(|v|).
\end{align*}
\end{itemize}
\end{definizione}

{\bf We will hence assume that all the $L_i$, $1\leqslant i\leqslant d$ are good Lagangians} (with a common Nagumo function $\theta$ and constants $A$ and $c_0$).
\begin{oss}
This hypothesis is mainly technical and serves at one specific instance: Theorem \ref{CS} and its application in Proposition \ref{consistency}. It allows to obtain automatic Lipschitz estimates of minimizers of a minimization problem involving time--dependent Lagrangians. For autonomous Lagrangians, such hypotheses are not needed thanks to conservation of energy and Clarke--Vinter's theorem (\cite{ClVi85}) but we will have to deal with non--autonomous Lagrangians. 
\end{oss}

\begin{definition}\rm
A matrix $B\in \mathcal{M}_d(\R)$ is a coupling matrix if its non--diagonal entries are non--positive and the sum of the elements of each line is non negative.
\end{definition}

It follows from the above definition that the diagonal entries of $B$ verify $b_{ii}\geqslant 0$.
\vspace{.5cm}

We recall that given a Lagrangian $L$ on $\R_+\times\T^N \times \R^N$, such that each $L(t,\cdot,\cdot)$ verifies the above  hypotheses, its Hamiltonian is defined by 
$$\forall (t,x,p) \in \R_+\times \T^N \times \R^N, \quad H(t,x,p) = \sup_{v\in \R^N} \langle p,v\rangle - L(t,x,v).  $$
The Hamiltonian is then a strictly convex function of $p$, it is also superlinear.

In what follows, $H_i$ is the Hamiltonian associated to $L_i$;
\begin{definition}\label{solsys}\rm
Let $\uu^0 : \T^N\to \R^d$ be a continuous initial datum. The unique solution (see Proposition \ref{comparison}) to the evolutionary equation 
\begin{equation}\label{intro evo wcs}
\frac{\partial u_i}{\partial t}+H_i(x,D_xu_i)+\sum_{j=1}^m b_{ij} u_j(t,x)=0\quad\hbox{in $(0,+\infty)\times\T^N$,}\quad\hbox{$\forall i\in\ind$,} 
\end{equation}
with $\uu(0,\cdot) = \uu^0$ will be denoted by $(t,x)\mapsto S(t)\uu^0(x)$.
\end{definition}

\begin{oss}
Existence and uniqueness results for this equation are established in \cite{CamLey} under additional growth assumptions on the Hamiltonians. Those assumptions are removed in \cite[Proposition A.1]{DSZ}. The proofs follow the same path as for a single equation. First, a comparison principle is established (see Proposition \ref{comparison}). This uses in an essential manner the sign properties of the coupling matrix $B$ (they imply the system fall in a more general class of coupled systems, see \cite{Engler}). This comparison principle implies uniqueness and existence follows from Perron's method (properties of $B$ give that a supremum of subsolutions is a subsolution).
\end{oss}

\begin{definition}\rm
We will denote by $W(t)$ the twisted Lax--Oleinik formula which to a vector valued function $\uu^0: \T^N\to \R^d$ associates another vector valued function $W(t)\uu^0: \T^N\to \R^d $ the entries of which are, for $i\in \ind$:

$$[W(t)\uu^0 (x)]_i = \inf_{\substack{\gamma : [-t,0]\to \T^N \\ \gamma(0)=x }} \big[\e^{-tB}\uu^0\big(\gamma(-t)\big)\big]_i + \Big[\int_{-t}^0 \e^{sB} \L\big(\gamma(s),\dot\gamma(s)\big)\dd s\Big]_i,
$$

where the infimum is taken over all absolutely continuous curves $\gamma : [-t,0]\to \T^N$ and where $\L = (L_i)_{i\in \ind}$.

We will often use the following notation:
\begin{equation}\label{twistedLO}
W(t)\uu^0 (x) = \inf_{\substack{\gamma : [-t,0]\to \T^N \\ \gamma(0)=x }} \e^{-tB}\uu^0\big(\gamma(-t)\big) + \int_{-t}^0 \e^{sB} \L\big(\gamma(s),\dot\gamma(s)\big)\dd s.
\end{equation}
\end{definition}

Note that in the previous equation, we only write one infimum to have a synthetic formula, but there are actually $d$ quantities to minimize hence possibly $d$ different minimizing curves.

The goal of this note is to show a link between $W$ and $S$. As is easily seen, one reason why $W$ differs from $S$ (apart from the fact that it does not provide a solution of the weakly coupled system in any reasonable sense) is that it does not verify the semi--group property (or sometimes also referred to as dynamical programming property; an explicit counterexample is given in appendix).  At the contrary, by the uniqueness of viscosity solutions, $S$ is indeed a semi--group, meaning that for all $s,t\geqslant 0$ we have $S(t+s)=S(t)\circ S(s)$.

Our main result is the following:
\begin{teorema}\label{the}
Let $\uu:\T^N \to \R^d$ be a Lipschitz function, then for any $t\geqslant 0$, the following holds:
$$S(t)\uu = \lim_{n\to +\infty} W(t/2^n)^{2^n}\uu.$$
\end{teorema}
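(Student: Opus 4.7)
The strategy I would follow is the classical Barles--Souganidis framework for convergence of monotone schemes: view $n\mapsto W(t/2^n)^{2^n}\uu$ as a time--discrete approximation of $S(t)\uu$ and check three properties --- monotonicity, $L^\infty$/Lipschitz stability, and consistency with \eqref{intro evo wcs}. Once these are in place, the upper and lower half--relaxed limits of this sequence are respectively a vectorial viscosity sub-- and supersolution of the weakly coupled system that match $\uu$ at $t=0$, and the comparison principle of Proposition \ref{comparison} forces them to coincide with $S(t)\uu$, giving uniform convergence on compact sets.

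Monotonicity is almost automatic from the definition of a coupling matrix: since $B$ has non--positive off--diagonal entries, $-B$ is Metzler, so the matrices $\e^{-hB}$ and $\e^{sB}$ for $s\leqslant 0$ have non--negative entries; formula \eqref{twistedLO} then preserves the componentwise order, and this property is stable under iteration. For stability I would test against constant competitors $\gamma\equiv x$ in \eqref{twistedLO}: condition (L2) together with the non--negativity of the row sums of $B$ produces uniform upper and lower bounds that propagate across iterations, and equi--Lipschitz bounds in $x$ propagate the same way starting from the Lipschitz initial datum $\uu$. For consistency I would fix $\phi\in C^2(\T^N,\R^d)$, plug the straight--line competitor $\gamma(s)=x+sv$ into \eqref{twistedLO}, and use $\e^{-hB}=I-hB+O(h^2)$ together with $\phi(x-hv)=\phi(x)-hD\phi(x)v+O(h^2)$ to obtain, componentwise,
\begin{equation*}
\frac{[W(h)\phi]_i(x)-\phi_i(x)}{h}
=\inf_{v\in\R^N}\bigl(L_i(x,v)-\langle D\phi_i(x),v\rangle\bigr)-\sum_{j=1}^d b_{ij}\phi_j(x)+o(1),
\end{equation*}
with the infimum equal to $-H_i(x,D\phi_i(x))$. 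This matches exactly the operator in \eqref{intro evo wcs}.

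The main obstacle is to make this consistency computation rigorous: for each component $i$ the infimum in \eqref{twistedLO} is realised by a different curve, and one has no a priori reason for those curves to be close to the constant path $x$. Uniform Lipschitz estimates on minimisers are needed to localise them near $x$ as $h\to 0$, and this is precisely where the good Lagrangian hypotheses and the Nagumo condition (N) enter, through the tools announced in Theorem \ref{CS} and Proposition \ref{consistency}. Once minimisers are known to have bounded speed, the Taylor expansion above can be made uniform in $x$, and the Barles--Souganidis test--function argument can be run in vectorial form: one picks a component in which $u_i-\phi_i$ attains a local extremum, uses monotonicity to replace the iterate by $\phi$ up to lower order errors and passes to the limit. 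This yields the sub/supersolution property of the half--relaxed limits; the stability estimates near $t=0$ give the initial condition, and Proposition \ref{comparison} closes the argument.
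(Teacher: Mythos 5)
Your proposal follows essentially the same Barles--Souganidis route as the paper: monotonicity of $W$ from the non-negativity of $\e^{-sB}$, consistency with \eqref{intro evo wcs}, half-relaxed limits $\overline\uu,\underline\uu$, the vectorial test-function trick (a single component where the extremum occurs, and the remaining components of $\Phi$ chosen to dominate $\overline\uu$), and closing with Proposition \ref{comparison}. The one place where your argument genuinely differs is the consistency step: you propose to get the upper bound by inserting the straight competitor $\gamma(s)=x+sv$ and then to match it from below by localising the minimiser near $x$ via the uniform speed bound of Theorem \ref{CS}, applying convexity of $L_i$ in $v$ to reduce to the average velocity. The paper instead works directly with the optimal curve and its conjugate momentum $(\gamma_t,p_t)$ solving Hamilton's equations, and expands the action along that trajectory. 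Both yield the same limit $-H_i(x,D\phi_i(x))-[B\Phi(x)]_i$ uniformly in $x$; your version is a bit more elementary (no need to invoke the Hamiltonian flow or $C^1$-regularity of optimal curves, only their Lipschitz bound plus Jensen), while the paper's version is more compact once the optimal-control machinery from \cite{CaSi00} is available. You also explicitly flag $L^\infty$/Lipschitz stability, which the paper leaves implicit; this is fine but worth retaining, since without some uniform bound the half-relaxed limits could be $\pm\infty$. Your identification of the Nagumo condition and the good-Lagrangian hypotheses as exactly what is needed to localise minimisers is correct and is the same crux the paper highlights.
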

The procedure of considering iterates of $W$ is a natural way of forcing the semi--group property. It has already appeared, for example making a link between variational and viscosity solutions associated to non--convex Hamiltonians (in the case of a single equation). See the works of Wei (\cite{Wei}) and also of Roos (\cite{Roos}) for more details on this subject.

This can also be seen as a result on the convergence of an approximate scheme for the system. Many results in the literature of viscosity solutions justify that the result can be expected to be true (see \cite{BaSo}). We will give a self contained proof of our result in this particular setting which is an adaptation of the previous reference.  

Finally let us comment on the previous statement and its hypotheses. They are willingly stronger than necessary because the proof is more natural in this setting. However, the Lipschitz continuity of the initial  data can be weakened to continuity (Theorem \ref{C^0}). The regularity of the Lagrangians can also be lowered to Lipschitz  (Theorem \ref{L_ilip}). Finally the fact of taking increasing subdivisions of the interval $[0,t]$ is not necessary in order to obtain the convergence result. It is however more natural in some regards (see section \ref{further} and particularly Remark \ref{remarque}). 

\section{Preliminaries}
\subsection{About a single Hamilton--Jacobi equation}

Given a continuous function $u : \T^N\to \R$, let us define the Lax--Oleinik semi--group as follows: if $0\leqslant s<t$  and $x\in \T^N$ then
$$T_L^{t,s}u(x) = \inf_{\gamma(t-s)=x} u\big(\gamma (0)\big) + \int_{0}^{t-s} L\big(s+\sigma, \gamma(\sigma), \dot\gamma(\sigma)\big)\dd \sigma.$$
The infimum in the previous formula is taken amongst absolutely continuous curves $\gamma :[0,t-s] \to \T^N$. 
Clearly, this family of operators verifies a Markov property, meaning that if $0\leqslant s<t <t'$ then 
$T_L^{t',t}\circ T_L^{t,s}u = T_L^{t',s}$.

Let us recall hereafter some properties verified by such Lagrangian functions and their Lax--Oleinik semi--group. 

\begin{teorema}\label{CS}
Let $u : \T^N \to \R$ be a $K$-Lipschitz continuous function and $L : [0,T] \times \T^N \times \R^N$ be a good Lagrangian, we define  the function $U : [0,T] \times \T^N \to \R$  by $U(t,x) = T^{t,0}u(x)$.
\begin{enumerate}
\item The function $U$ is a viscosity solution to the Cauchy problem
\begin{equation}\label{Cauchy}
\begin{cases}
\partial_t U + H(t,x,D_x U) = 0,\\
U(0,\cdot) = u.
\end{cases}
\end{equation}
\item
For any $x\in \T^N$ and $0\leqslant s<t\leqslant T$, the infimum in the definition of the Lax--Oleinik semi--group is a minimum. Moreover, there exists a constant $M>0$ depending solely on $\theta$, $c_0$ and $K$ such that any minimizer $\gamma$ is $M$-Lipschitz and even $C^1$.
\item
Finally,  the function $U$ is  Lipschitz continuous (with  Lipschitz constants depending only on  $\theta$, $c_0$ and $K$)  in $[0,T]\times \T^N$ hence it is the unique viscosity solution to \eqref{Cauchy}.
\end{enumerate}
\end{teorema}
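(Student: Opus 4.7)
My plan for Theorem \ref{CS} is as follows. All three items are classical in the autonomous setting; the one genuinely new point in the non-autonomous case treated here is the a priori Lipschitz estimate on minimizers, where the Nagumo condition $(N)$ substitutes for the usual conservation-of-energy argument (cf.\ the Remark after the definition of good Lagrangian).

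I would begin with part (2). Existence of a minimizer follows from Tonelli's direct method: a minimizing sequence $(\gamma_n)$ has action bounded above by the constant curve $\sigma\mapsto x$, finite because $L$ is $C^1$ and hence bounded on $[0,T]\times\T^N\times\{0\}$; combined with the lower bound $L\geq\theta(|v|)-c_0$ and superlinearity of the Nagumo function $\theta$, de la Vall\'ee Poussin yields weak relative compactness of $(\dot\gamma_n)$ in $L^1$, while convexity of $L(t,x,\cdot)$ gives lower semicontinuity of the action along the weak limit. The main technical step is the Lipschitz bound on any minimizer $\gamma$: a local reparametrization (slow a fast segment and speed up an adjacent slow one so that endpoints and total time are preserved) combined with the Nagumo inequality $\theta(q+m)\leq K_M(1+\theta(q))$ and the $K$-Lipschitz control of $u(\gamma(0))$ rules out arbitrarily large speeds and produces $M=M(\theta,c_0,K)$ so that any minimizer is $M$-Lipschitz. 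Once $\gamma$ is known Lipschitz, the du Bois-Reymond form of Euler--Lagrange together with strict convexity of $L(t,x,\cdot)$ and $C^1$ regularity of $L$ upgrade $\gamma$ to $C^1$.

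For part (1), the dynamic programming identity $T_L^{t',s}=T_L^{t',t}\circ T_L^{t,s}$ is immediate from the definition, and yields the viscosity sub- and supersolution inequalities by the standard Taylor expansion: for subsolution, test along affine curves $\gamma(\sigma)=x-(h-\sigma)v$ of arbitrary velocity $v\in\R^N$ on a short interval and take the sup over $v$ to recover $H$; for supersolution, run the same expansion along the minimizer from part (2), whose $C^1$ regularity is what lets one identify the limiting velocity in the Taylor remainder.

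For part (3), Lipschitz continuity in $x$ is essentially free on the torus: translating a minimizer for $U(t,x)$ by $y-x$ produces an admissible curve for $U(t,y)$ with the same action by $\T^N$-periodicity of $L$, so $U(t,y)-U(t,x)\leq u(\gamma(0)+(y-x))-u(\gamma(0))\leq K|y-x|$ and symmetry closes the bound. In time, the upper estimate $U(t+h,x)-U(t,x)\leq Ch$ comes from testing $\gamma\equiv x$ in $U(t+h,x)=T_L^{t+h,t}U(t,\cdot)(x)$ using boundedness of $L(\cdot,x,0)$. For the matching lower estimate, fix any admissible $\eta$ with $\eta(h)=x$, combine the just-obtained $x$-Lipschitz bound $U(t,\eta(0))\geq U(t,x)-K\int_0^h|\dot\eta|\dd\sigma$ with $\int L\geq\int\theta(|\dot\eta|)\dd\sigma-c_0 h$, and absorb $K|\dot\eta|$ into $\theta(|\dot\eta|)$ via superlinearity of $\theta$ to obtain $U(t+h,x)\geq U(t,x)-Ch$. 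Uniqueness then follows from the standard comparison principle for the Cauchy problem applied to this Lipschitz solution. The hard part is unambiguously the reparametrization argument producing the $M$-Lipschitz bound on minimizers; everything else is essentially bookkeeping around that estimate.
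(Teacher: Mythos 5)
Your plan tracks what the paper actually does, which is to cite Cannarsa--Sinestrari for the Lipschitz estimate on minimizers (via the Nagumo condition replacing conservation of energy), Clarke for the $C^1$ upgrade, and a folklore uniqueness argument for Lipschitz solutions; the paper gives no self-contained proof, so the real comparison is against those sources. Your treatment of part~(2) -- Tonelli compactness plus a reparametrization/Nagumo argument to bound the speed of minimizers -- is the correct skeleton, and your part~(1) and the time-Lipschitz half of part~(3) are sound. (A small remark on part~(1): for the supersolution inequality the Lipschitz bound on the minimizer already suffices to absorb the Taylor remainder using $L(\sigma,\gamma,v)-p\cdot v\geqslant -H(\sigma,\gamma,p)$; you do not actually need to identify a limiting velocity, so the $C^1$ regularity is not essential there.)

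There is, however, a genuine error in your space-Lipschitz estimate in part~(3). You claim that translating a minimizer $\gamma$ of $U(t,x)$ by $y-x$ yields a competitor for $U(t,y)$ \emph{with the same action by $\T^N$-periodicity of $L$}. That is false: periodicity means $L(t,x+k,v)=L(t,x,v)$ only for $k\in\Z^N$, and $L$ does depend on $x$ in general, so the translated curve $\gamma+(y-x)$ has a different action. The conclusion $U(t,y)-U(t,x)\leqslant K|y-x|$ therefore does not follow as stated. The argument can be repaired using the second half of hypothesis (L2): along the $M$-Lipschitz minimizer one has $|\partial_x L(\gamma,\dot\gamma)|\leqslant A\,\theta(M)$, so the action of $\gamma+(y-x)$ differs from that of $\gamma$ by at most $A\,\theta(M)\,T\,|y-x|$, giving a space-Lipschitz constant of the form $K+A\,\theta(M)\,T$ (equivalently, one may tilt the curve linearly instead of translating it, with the same control via $\partial_xL$ and $\partial_vL$). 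This repaired bound also depends on $A$ and $T$, not just on $\theta,c_0,K$ as the theorem's statement suggests; any argument that perturbs curves in $x$ through an $x$-dependent Lagrangian must pick up this dependence, so the stated dependence is implicitly within a class of good Lagrangians with $\theta,c_0,A$ and the time horizon $T$ fixed. This error also propagates to your time-Lipschitz step, which invokes the $x$-Lipschitz bound of $U(t,\cdot)$; once the latter is corrected, that step goes through unchanged with the corrected constant.
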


\begin{oss}
Lipschitz continuity of 
$U$ is a direct consequence of the Lipschitz continuity of the minimizing curves. Lipschitz continuity of minimizing curves is proved in  \cite[Theorems 6.2.5, 6.3.1]{CaSi00} from which the hypotheses on the Lagrangians are taken. The $C^1$ property of minimizing curves is a consequence of the strict convexity of the Lagrangians (see \cite[Ex. 18.5 p. 351]{Cl13}). Actually, finer properties can be obtained, as semi--concavity estimates (see for instance \cite[Theorems 6.4.2, 6.4.3 and 6.4.4]{CaSi00}) but we will not need them. The fact that existence of a Lipschitz solution to \eqref{Cauchy} implies uniqueness is a folklore result (see \cite{canson} and references therein or \cite[Proposition A.2]{DSZ}).
\end{oss}

\subsection{About systems}
Recall that
the matrix $B\in \mathcal{M}_d(\R)$ is a coupling matrix if its non--diagonal entries are non--positive and the sum of the elements of each line is non negative. We denote by $\1 = (1,\cdots , 1)^T$ the vector with all entries equal do $1$.

\begin{prop}\label{sto}
Under the above hypotheses, for all $s\geqslant 0$, the entries of $\e^{-sB}$ are all non--negative.
\end{prop}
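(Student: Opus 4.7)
The plan is to use the standard trick of shifting $-B$ by a multiple of the identity so that it becomes entrywise non-negative, and then exploit the commutativity of $\lambda I$ with everything in sight.

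First, I observe that $-B$ is a Metzler-type matrix: since the off-diagonal entries of $B$ are non-positive, the off-diagonal entries of $-B$ are non-negative. The only issue for the positivity of $e^{-sB}$ comes from the diagonal of $-B$, which contains the values $-b_{ii}$, and we already know from the definition (and as noted just before the proposition) that $b_{ii}\geqslant 0$.

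Next, set $\lambda=\max_{1\leqslant i\leqslant d} b_{ii}\geqslant 0$ and define $M:=\lambda I-B$. By construction, every entry of $M$ is non-negative: the off-diagonal entries equal those of $-B$ and are non-negative, while the diagonal entry $\lambda-b_{ii}$ is non-negative by the choice of $\lambda$. The key step is then the series identity
$$\e^{sM}=\sum_{k=0}^{+\infty}\frac{s^k M^k}{k!},\qquad s\geqslant 0,$$
whose partial sums are all entrywise non-negative (sums and products of entrywise non-negative matrices are entrywise non-negative), so the limit $\e^{sM}$ is entrywise non-negative as well.

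Finally, since $\lambda I$ commutes with $B$, one has $\e^{-sB}=\e^{-s\lambda I}\e^{sM}=\e^{-s\lambda}\e^{sM}$, which is a non-negative scalar times an entrywise non-negative matrix, hence entrywise non-negative. This gives the conclusion for every $s\geqslant 0$. There is no real obstacle in this argument; the only point worth being careful about is checking that $b_{ii}\geqslant 0$, so that $\lambda$ can be chosen non-negative and the splitting $-B=-\lambda I+M$ puts a genuinely non-negative matrix $M$ next to the commuting scalar part. Notice that the second part of the coupling matrix hypothesis (non-negativity of row sums) is not used here; it becomes relevant only when one wants the stronger sub-stochastic estimate $\e^{-sB}\1\leqslant\1$.
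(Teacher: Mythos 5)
Your proof is correct, but it takes a different route from the paper. The paper simply invokes the formula $\e^{-sB}=\lim_{n\to+\infty}\big(\mathrm{Id}_d-\frac{s}{n}B\big)^n$ and leaves the rest to the reader: for $n$ large enough (depending on $s$), each factor $\mathrm{Id}_d-\frac{s}{n}B$ is entrywise non-negative (off-diagonal entries $-\frac{s}{n}b_{ij}\geqslant 0$, diagonal entries $1-\frac{s}{n}b_{ii}\geqslant 0$), so the $n$-th power is entrywise non-negative, and so is the limit. You instead use the classical uniformization trick: shift by $\lambda\mathrm{Id}_d$ with $\lambda=\max_i b_{ii}$ so that $M=\lambda\mathrm{Id}_d-B$ is entrywise non-negative, expand $\e^{sM}$ as its power series (whose partial sums are visibly non-negative), and pull out the commuting scalar factor $\e^{-s\lambda}$. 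Both arguments are elementary and exploit the Metzler structure of $-B$. Your version has the small advantage of being uniform in $s$ (no ``for $n$ large enough'' step) and of making every positivity check explicit; the paper's version is shorter but leaves the verification that the base matrix is eventually non-negative implicit. Your closing observation that the row-sum condition is not needed here and only enters for the sub-stochastic bound $\e^{-sB}\1\leqslant\1$ (Corollary \ref{decroissant}) is accurate and worth noting.
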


\begin{proof}
This is an immediate consequence of the formula 
$$\e^{-sB} = \lim_{n\to +\infty}\Big( {\rm Id}_d -\frac{s}{n}B\Big)^n.$$
\end{proof}
It follows from the previous proposition and the fact that the exponential is smooth that:
\begin{prop}\label{good}
Let $T>0$, then for all $1\leqslant i \leqslant d$, the Lagrangian $\LL_i(s,x,v)= [\e^{sB}\L (x,v)]_i$ is a good Lagrangian on $[-T,0]\times \T^N\times \R^N$. 
\end{prop}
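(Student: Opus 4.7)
The plan is to exploit the identity $\LL_i(s,x,v) = \sum_{j=1}^d a_{ij}(s)\, L_j(x,v)$, where $(a_{ij}(s))_{i,j}$ denote the entries of $\e^{sB}$, and verify each part of the definition of a good Lagrangian by linearity.

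First I would collect the relevant facts about the coefficients $a_{ij}(s)$ for $s \in [-T, 0]$. Proposition \ref{sto} (applied to $-s \geqslant 0$) gives $a_{ij}(s) \geqslant 0$, and a short argument shows moreover that $a_{ii}(s) > 0$: from $\e^{sB} = \lim_n (I - \frac{|s|}{n} B)^n$, for $n$ large each matrix $I - \frac{|s|}{n} B$ has positive diagonal and non--negative off--diagonal entries (the latter because $b_{ij}\leqslant 0$ for $i\neq j$), so the $(i,i)$ entry of its $n$--th power dominates $(1 - \frac{|s|}{n}b_{ii})^n \to \e^{-|s|b_{ii}} > 0$. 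By continuity on the compact interval $[-T, 0]$, the row sums $\rho_i(s) := \sum_j a_{ij}(s)$ therefore satisfy $0 < c_2 \leqslant \rho_i(s) \leqslant C < \infty$ for constants $c_2, C$ independent of $s$ and $i$.

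From there the verification of (L1) and (L2) is purely algebraic. $C^1$ regularity is immediate from the smoothness of $s \mapsto \e^{sB}$ and the $C^1$ regularity of the $L_j$, and strict convexity of $\LL_i(s,x,\cdot)$ comes from writing it as a non--negative combination of convex $L_j(x,\cdot)$'s in which the strictly positive coefficient $a_{ii}(s)$ multiplies the strictly convex $L_i(x,\cdot)$. Summing the bounds from (L2) for the individual $L_j$ and using $\theta \geqslant 0$ yields
$$\LL_i(s,x,v) \geqslant c_2\, \theta(|v|) - C c_0, \qquad |\partial_x \LL_i(s,x,v)| + |\partial_v \LL_i(s,x,v)| \leqslant C A\, \theta(|v|).$$
Setting $\tilde\theta := c_2\theta$, $\tilde c_0 := C c_0$ and $\tilde A := C A/c_2$, these read as the (L2) inequalities for $\LL_i$ relative to $\tilde\theta$, and (N) for $\tilde\theta$ follows from (N) for $\theta$ after multiplying the constants by $\max(c_2, 1)$. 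The genuinely delicate point will be this strict positivity of $a_{ii}(s)$, which really uses the special sign pattern of the coupling matrix $B$ and not merely the non--negativity granted by Proposition \ref{sto}.
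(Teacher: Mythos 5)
Your proposal is correct, and it is essentially a detailed version of the paper's argument: the paper asserts this proposition with no real proof beyond citing the non--negativity of the entries of $\e^{sB}$ (Proposition \ref{sto}) and the smoothness of $s\mapsto\e^{sB}$, which is exactly the skeleton you flesh out via the decomposition $\LL_i = \sum_j a_{ij}(s)L_j$. One remark on your closing sentence: the strict positivity of $a_{ii}(s)$, which you single out as the genuinely delicate point resting on the special sign pattern of $B$, is stronger than what the proposition actually requires. Since $\det \e^{sB} = \e^{s\,\mathrm{tr}\,B} \neq 0$, no row of $\e^{sB}$ can vanish, and combined with the non--negativity from Proposition \ref{sto} this already forces some $a_{ij}(s) > 0$, which suffices for strict convexity of $\LL_i(s,x,\cdot)$; continuity of $s\mapsto\rho_i(s) > 0$ on the compact $[-T,0]$ then gives the uniform lower bound $c_2$ without any further input. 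Your explicit estimate $a_{ii}(s) \geqslant \e^{-|s|b_{ii}}$ via $(1 - \tfrac{|s|}{n}b_{ii})^n$ is nevertheless a clean quantitative shortcut that avoids the compactness argument, and your observation that the Nagumo function must be rescaled to $c_2\theta$ is exactly right (by Corollary \ref{decroissant} the row sums are $\leqslant 1$, so one cannot keep $\theta$ itself).
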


\begin{cor}\label{decroissant}
For all $s\geqslant 0$, the following inequalities hold:
$$0\1 \leqslant \e^{-sB}\1 \leqslant \1.$$
\end{cor}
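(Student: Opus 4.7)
The lower bound is immediate from Proposition \ref{sto}: since every entry of $\e^{-sB}$ is non-negative and $\1$ has non-negative entries, the product $\e^{-sB}\1$ has non-negative entries.

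For the upper bound I would mimic the proof of Proposition \ref{sto} and work with the discrete approximation
\[
\e^{-sB}\1 \;=\; \lim_{n\to+\infty}\Bigl({\rm Id}_d-\frac{s}{n}B\Bigr)^{n}\1.
\]
Set $M_n = {\rm Id}_d - \frac{s}{n}B$. For $n$ large enough that $\frac{s}{n}b_{ii}\leqslant 1$ for every $i$ (which is possible since the $b_{ii}$ are non-negative), the entries of $M_n$ are all non-negative: the diagonal entries are $1-\frac{s}{n}b_{ii}\geqslant 0$ and the off-diagonal entries are $-\frac{s}{n}b_{ij}\geqslant 0$ because $b_{ij}\leqslant 0$ for $i\neq j$. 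Moreover, the $i$-th entry of $M_n\1$ equals $1-\frac{s}{n}\sum_j b_{ij}\leqslant 1$, using that the row sums of $B$ are non-negative. Hence $M_n\1 \leqslant \1$ componentwise.

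Iterating this inequality will give the result: since $M_n$ has non-negative entries, multiplying both sides of $M_n\1\leqslant\1$ by $M_n$ preserves the inequality, so by induction $M_n^{k}\1\leqslant M_n^{k-1}\1\leqslant\cdots\leqslant\1$ for every $k\geqslant 0$. Taking $k=n$ and letting $n\to+\infty$ yields $\e^{-sB}\1\leqslant \1$, which is the desired upper bound.

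I do not anticipate a real obstacle here; the main point is simply to check that $n$ can be chosen large enough so that $M_n$ has non-negative entries, after which the monotonicity of multiplication by a non-negative matrix acting on vectors with the inequality $M_n\1\leqslant\1$ closes the argument by passage to the limit. The lower bound requires no additional argument beyond the previous proposition.
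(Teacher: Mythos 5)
Your proof is correct, but it takes a different route from the paper's. For the upper bound the paper argues in continuous time: it differentiates $s\mapsto \e^{-sB}\1$, obtaining $\frac{\dd}{\dd s}\e^{-sB}\1 = -\e^{-sB}B\1 \leqslant 0\1$ (using Proposition \ref{sto} together with $B\1\geqslant 0\1$ from the non-negative row sums), and concludes that each component is nonincreasing in $s$ with value $1$ at $s=0$. You instead reuse the discrete Euler approximation $\e^{-sB}=\lim_n (I-\tfrac{s}{n}B)^n$ from the proof of Proposition \ref{sto}, check that $M_n=I-\tfrac{s}{n}B$ is entrywise non-negative for $n$ large and satisfies $M_n\1\leqslant\1$, then iterate and pass to the limit. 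Both arguments are clean and short; the paper's is perhaps slightly slicker since it needs no largeness of $n$ and no induction, while yours has the merit of running on the exact same technique as the preceding proposition, making the two proofs uniform. There is no gap in your argument.
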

\begin{proof}
The left inequality follows from Proposition \ref{sto}.
For the right inequality, write 
$$\frac{\dd}{\dd s}  \e^{-sB}\1 = \e^{-sB}(-B\1) \leqslant 0\1.$$
It follows that all entries are decreasing as $s$ increases. As equality holds for $s=0$ this proves the result.
\end{proof}

We now come back to the Definition \ref{solsys}. Such a solution exists and is unique thanks to  the following more general comparison principle (see \cite[Proposition 2.5]{DSZ}):

\begin{prop}\label{comparison}
Let $\overline u$ and $\underline u$ be respectively a lower semicontinuous supersolution and a bounded upper semicontinuous subsolution of \eqref{intro evo wcs}. Assume they are bounded on $[0,T]\times \T^N$, then $\overline u \geqslant \underline u$ on $[0,T]\times \T^N$.
\end{prop}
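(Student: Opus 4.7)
The strategy I would use is the classical doubling-of-variables argument adapted to systems via the sign properties of the coupling matrix $B$. The key observation is that, because the off-diagonal entries of $B$ are non-positive and the row sums are non-negative, the coupling is \emph{quasi-monotone} in the vector ordering, so one can always reduce to a scalar inequality at the component where the difference $\underline u_i - \overline u_i$ is maximal.

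First I would argue by contradiction: assume $M := \sup_{(t,x,i)\in [0,T]\times \T^N\times \ind} \bigl(\underline u_i(t,x) - \overline u_i(t,x)\bigr) > 0$. After the standard perturbation $\underline u_i \leftarrow \underline u_i - \eta/(T-t)$ one may moreover assume the supremum is attained in $[0,T')\times \T^N$ for some $T'<T$ (and that $\underline u$ is a \emph{strict} subsolution, gaining a positive term $\eta/(T-t)^2$ in the subsolution inequality). Since $\uu^0$ is continuous and both functions agree on $\{t=0\}$, the supremum cannot be attained at $t=0$ for small $\eta$. Now double the variables: for $\eps>0$, let
$$\Phi_\eps(t,x,s,y,i) = \underline u_i(t,x) - \overline u_i(s,y) - \frac{|x-y|^2}{\eps} - \frac{(t-s)^2}{\eps},$$
and pick a maximizing tuple $(t_\eps,x_\eps,s_\eps,y_\eps,i_\eps)$. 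Compactness of $\T^N$ and of the finite set $\ind$ ensures existence, and passing to a subsequence, $i_\eps = i_0$ is constant, $(t_\eps,s_\eps)\to (\bar t,\bar t)$ with $\bar t >0$, $(x_\eps,y_\eps)\to (\bar x,\bar x)$, and $|x_\eps-y_\eps|^2/\eps + (t_\eps-s_\eps)^2/\eps \to 0$.

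Next I would apply Crandall--Ishii's lemma at $(t_\eps,x_\eps,s_\eps,y_\eps)$ for the component $i_0$, obtaining jets $(a,p)\in \overline{\mathcal P}^{2,+}\underline u_{i_0}(t_\eps,x_\eps)$ and $(b,p)\in \overline{\mathcal P}^{2,-}\overline u_{i_0}(s_\eps,y_\eps)$ with $a-b = 2(t_\eps-s_\eps)/\eps$ and $p = 2(x_\eps-y_\eps)/\eps$. The subsolution and supersolution inequalities for the $i_0$-th equation read
\begin{align*}
a + H_{i_0}(x_\eps, p) + \sum_{j} b_{i_0 j}\, \underline u_j(t_\eps,x_\eps) + \frac{\eta}{(T-t_\eps)^2} &\leqslant 0,\\
b + H_{i_0}(y_\eps, p) + \sum_{j} b_{i_0 j}\, \overline u_j(s_\eps,y_\eps) &\geqslant 0.
\end{align*}
Subtracting and using continuity of $H_{i_0}$ on bounded sets yields, in the limit $\eps\to 0$,
$$\sum_{j} b_{i_0 j} \bigl(\underline u_j(\bar t,\bar x) - \overline u_j(\bar t,\bar x)\bigr) + \frac{\eta}{(T-\bar t)^2} \leqslant 0.$$

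The crux is now the sign argument. By the choice of $i_0$ as a maximizing index, $\underline u_j(\bar t,\bar x) - \overline u_j(\bar t,\bar x) \leqslant \underline u_{i_0}(\bar t,\bar x)-\overline u_{i_0}(\bar t,\bar x) =: M_\eta > 0$ for every $j$. Since $b_{i_0 j}\leqslant 0$ for $j\neq i_0$, one has $b_{i_0 j}(\underline u_j-\overline u_j) \geqslant b_{i_0 j} M_\eta$, whence
$$\sum_j b_{i_0 j}(\underline u_j - \overline u_j) \;\geqslant\; \Bigl(\sum_j b_{i_0 j}\Bigr) M_\eta \;\geqslant\; 0,$$
using the non-negativity of the row sum. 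This contradicts the strict inequality above, proving $M\leqslant 0$ after letting $\eta\to 0$.

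The main obstacles I anticipate are: (i) justifying that the penalized maximum is attained at an interior time (the perturbation $\eta/(T-t)$ is precisely what handles this, but one must check the supremum is attained given only lower/upper semicontinuity, which is where boundedness of $\underline u$ and $\overline u$ enters); and (ii) localizing the index $i_0$ rigorously, which is straightforward here because $\ind$ is finite but would become delicate for continuous families of equations. The actual dependence on $x$ in $H_{i_0}$ is controlled by the classical estimate $|H_{i_0}(x_\eps,p) - H_{i_0}(y_\eps,p)| \to 0$ from $|x_\eps-y_\eps|\,|p| \leqslant 2|x_\eps-y_\eps|^2/\eps \to 0$; no quadratic-growth assumption on $H_i$ is required because the coupling term carries no gradient.
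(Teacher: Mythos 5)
The paper does not supply its own proof of this proposition: it is cited verbatim from \cite{DSZ} (Proposition 2.5 there). So there is no in-paper argument to compare against; what follows is an assessment of your proof on its own merits.

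Your approach — doubling of variables combined with the quasi-monotonicity of the coupling matrix $B$ — is the standard and correct route for comparison principles for weakly coupled systems (going back to Engler--Lenhart and Ishii--Koike), and the core of your argument is sound. In particular the way you exploit the structure of $B$ is exactly right: pick the index $i_0$ maximizing the doubled functional (constant along a subsequence since $\ind$ is finite), use $b_{i_0 j}\leqslant 0$ for $j\neq i_0$ to bound $b_{i_0 j}(\underline u_j-\overline u_j)$ from below by $b_{i_0 j}M_\eta$, and close with the non-negative row sum. This is the crux and you have it.

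Two small points are worth tightening. First, invoking Crandall--Ishii's lemma is overkill here: \eqref{intro evo wcs} is first order, so the penalizer itself is an admissible test function and yields first-order jets directly. Related, your bookkeeping of the time derivatives is off: differentiating the penalization $(t-s)^2/\eps$ gives $a=b=2(t_\eps-s_\eps)/\eps$, not $a-b=2(t_\eps-s_\eps)/\eps$; fortuitously $a-b=0$ is exactly what you need, so the error is harmless but should be corrected. Second, the step where you pass to the limit in the coupling sum requires a word of care: for $j\neq i_0$ you only have one-sided convergence (u.s.c.\ of $\underline u_j$, l.s.c.\ of $\overline u_j$), so $\underline u_j(t_\eps,x_\eps)-\overline u_j(s_\eps,y_\eps)$ has a $\limsup$ bounded by $\underline u_j(\bar t,\bar x)-\overline u_j(\bar t,\bar x)$, not a limit; it is again the sign $b_{i_0 j}\leqslant 0$ that flips this into a usable lower bound on $\liminf$. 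Cleanest is to run the sign argument at finite $\eps$, using $\underline u_j(t_\eps,x_\eps)-\overline u_j(s_\eps,y_\eps)\leqslant \underline u_{i_0}(t_\eps,x_\eps)-\overline u_{i_0}(s_\eps,y_\eps)=:M_\eps$, which follows directly from optimality of $i_0$ in the doubled problem; then $\sum_j b_{i_0 j}(\cdots)\geqslant(\sum_j b_{i_0 j})M_\eps\geqslant 0$ once $M_\eps>0$, and no limit passage in the coupling is needed at all.

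Finally, one structural remark. The estimate $|H_{i_0}(x_\eps,p)-H_{i_0}(y_\eps,p)|\to 0$ is where the hypotheses on the Lagrangians genuinely bite (for a Tonelli Hamiltonian the modulus in $x$ degrades as $|p|\to\infty$, and $p=2(x_\eps-y_\eps)/\eps$ is not a priori bounded). Your remark that the coupling carries no gradient is correct but does not by itself dispense with a bound on $|p|$; in this setting one typically uses Lipschitz or coercivity estimates to get a uniform bound on $|p_\eps|$. And as stated the proposition should really include a comparison of the traces at $t=0$ (which you implicitly used to exclude $\bar t=0$); without it the conclusion fails.
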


Let us now come back to the twisted Lax--Oleinik formula
$$
W(t)\uu^0 (x) = \inf_{\substack{\gamma : [-t,0]\to \T^N \\ \gamma(0)=x }} \e^{-tB}\uu^0\big(\gamma(-t)\big) + \int_{-t}^0 \e^{sB} \L\big(\gamma(s),\dot\gamma(s)\big)\dd s.
$$

Using the notation  $\LL_i(s,x,v)= [\e^{sB}\L (x,v)]_i$ the twisted Lax--Oleinik formula may be interpreted as follows: 

$$[W(t)\uu^0 (x)]_i = T_{\LL_i(\cdot-t,\cdot,\cdot)}^{t,0}[\e^{-tB}\uu^0]_i (x).$$

Hence, as by Proposition \ref{good}, the $\LL_i$ are good Lagrangians (when restricted to $t\in [-T,0]$), Theorem \ref{CS} applies to the twisted Lax--Oleinik formula.

\section{Proof of Theorem \ref{the}}

\begin{definizione}
Given $n>0$ and $t \in (0,T]$, let us define the iterated operator $W_n(t) = W(s)\circ \big(W(T/2^n)\big)^k$ where $s> 0$ and $k\geqslant 0$ are such that $t = kT/2^n + s $ and $s\leqslant T/2^n$.
\end{definizione}

Following \cite{BaSo}, we state some fundamental properties of the operators $W$ and $W_n$.

\begin{prop}
The operator $W$ verifies the following:
\begin{itemize}
\item \underline{Monotonous}: if $\uu\leqslant \vv$ and $t>0$ then $W(t)(\uu)\leqslant W(t)(\vv)$,
\item \underline{Continuity}: if $k\in \R$ then for any function $\uu$ and $t>0$, $W(t)(\uu + k\1) = W(t)(\uu) + k\e^{-tB}\1$. In particular, $W(t)$ is $1$--Lipschitz for the sup norm.
\end{itemize}
It follows immediately that $W_n$ enjoys the same properties.
\end{prop}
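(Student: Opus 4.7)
The plan is to derive both statements about $W$ from two facts already established: Proposition \ref{sto} (the entries of $\e^{-sB}$ are non-negative) and Corollary \ref{decroissant} ($\e^{-sB}\1 \leqslant \1$ componentwise). For monotonicity, I would fix a component $i\in\ind$ and an absolutely continuous curve $\gamma$ ending at $x$. The hypothesis $\uu\leqslant\vv$ together with the non-negativity of the entries of $\e^{-tB}$ gives $[\e^{-tB}\uu(\gamma(-t))]_i \leqslant [\e^{-tB}\vv(\gamma(-t))]_i$, and adding the same Lagrangian integral to both sides before taking infimum over $\gamma$ preserves the scalar inequality in the $i$-th component.

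For the continuity identity, the key observation is that $\e^{-tB}(k\1) = k\e^{-tB}\1$ is a constant vector independent of $\gamma$, so it factors out of the componentwise infimum, yielding $W(t)(\uu+k\1) = W(t)\uu + k\e^{-tB}\1$. I would also note that the argument goes through verbatim for any constant vector $\cc$ in place of $k\1$, giving the generalized shift identity $W(t)(\uu+\cc) = W(t)\uu + \e^{-tB}\cc$; this extended version will be needed for $W_n$. The $1$-Lipschitz estimate then follows by sandwiching: writing $k = \|\uu-\vv\|_\infty$, one has $\vv-k\1 \leqslant \uu \leqslant \vv+k\1$, and combining monotonicity with the shift identity gives $|[W(t)\uu]_i - [W(t)\vv]_i| \leqslant k\,[\e^{-tB}\1]_i \leqslant k$ componentwise, the final bound coming from Corollary \ref{decroissant}.

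For the extension to $W_n(t) = W(s)\circ W(T/2^n)^k$ with $\tau = T/2^n$ and $t = k\tau + s$, monotonicity transfers immediately through composition. The shift identity requires the slightly generalized form: each application of $W(\tau)$ converts a constant shift $\cc$ into $\e^{-\tau B}\cc$, so $k$ iterations produce a shift of $\e^{-k\tau B}(k'\1)$, and the final $W(s)$ uses the commutativity $\e^{-sB}\e^{-k\tau B} = \e^{-(s+k\tau)B} = \e^{-tB}$ to deliver $W_n(t)(\uu + k'\1) = W_n(t)\uu + k'\e^{-tB}\1$. The $1$-Lipschitz property then follows exactly as for $W$ from this shift identity together with Corollary \ref{decroissant}.

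The main (minor) subtlety is noticing that the shift identity as literally stated is not directly preserved under iteration, because $\e^{-\tau B}\1$ need not be a multiple of $\1$; this forces one to prove the generalized shift for arbitrary constant vectors before iterating. There is no genuine analytic obstacle here---everything is a consequence of the structural sign properties of the coupling matrix $B$ already recorded in Proposition \ref{sto} and Corollary \ref{decroissant}.
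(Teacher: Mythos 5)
Your proof is correct and matches the (largely implicit) argument the paper intends: monotonicity from the non-negativity of the entries of $\e^{-tB}$ (Proposition~\ref{sto}), the shift identity by factoring the constant vector $k\e^{-tB}\1$ out of the componentwise infimum, and the $1$-Lipschitz bound from Corollary~\ref{decroissant}. Your observation that passing to $W_n$ really requires the shift identity for arbitrary constant vectors $\cc$ (since $\e^{-\tau B}\1$ need not be a scalar multiple of $\1$) is a genuine precision that the paper's ``follows immediately'' glosses over.
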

The second property follows from Corollary \ref{decroissant}. 

The last property we state is fundamental as it links the operators $W$ with the original system \eqref{intro evo wcs}:

\begin{prop}\label{consistency}
The operator $W$ is \underline{consistent} in the sense that if $\Phi :  \T^N \to \R^d$ is a $C^1$ function then
$$\lim_{t\to 0^+} \frac{W(t)\Phi - \Phi}{t} = -\HH(\cdot , D\Phi)-B\Phi,$$
where we use the notation $\HH(\cdot , D\Phi)=\big(H_i (\cdot , D \phi_i)\big)_{i\in \ind}$.
\end{prop}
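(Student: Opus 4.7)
The plan is to work componentwise using the interpretation
$$[W(t)\Phi(x)]_i = \inf_{\gamma(0)=x} \big[\e^{-tB}\Phi(\gamma(-t))\big]_i + \int_{-t}^0 \mathcal{L}_i(s,\gamma(s),\dot\gamma(s))\,\dd s,$$
with $\mathcal{L}_i(s,x,v)=[\e^{sB}\L(x,v)]_i$, and to Taylor expand each piece in $t$. The computation decouples naturally into the $B\Phi$ contribution coming from the multiplicative matrix factor and the $H_i$ contribution coming from a classical Fenchel duality at the base point.

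First I would secure a uniform velocity bound for near-minimizers. Since $\Phi\in C^1(\T^N,\R^d)$ on the compact torus, $\Phi$ is Lipschitz, and $\e^{-tB}$ is continuous in $t$, so $\{[\e^{-tB}\Phi]_i\}_{t\in[0,1]}$ is equi-Lipschitz; by Proposition~\ref{good} the $\mathcal{L}_i$ are good Lagrangians on $[-1,0]\times\T^N\times\R^N$, so Theorem~\ref{CS}(2) furnishes a constant $M>0$ independent of $t\in(0,1]$ and of $i$ such that every minimizer $\gamma$ of the $i$-th problem is $M$-Lipschitz. In particular one may restrict the infimum to curves with $\|\dot\gamma\|_\infty\leqslant M$ up to $o(t)$, and then the compact set $\bar B_M\subset\R^N$ controls all remainders uniformly via continuity of $L_i$, $\partial L_i$, $B$ and $D\Phi$ on the compact domain.

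Next, the upper bound. Let $v^\star=v^\star(x,i)$ be the maximizer in $H_i(x,D\phi_i(x))=\sup_v\bigl(\langle D\phi_i(x),v\rangle-L_i(x,v)\bigr)$; it exists, is finite and locally bounded in $x$ by strict convexity and superlinearity of $L_i$. Plugging the constant-velocity test curve $\gamma(s)=x+sv^\star$, I expand
$$\big[\e^{-tB}\Phi(\gamma(-t))\big]_i = \phi_i(x) - t\langle D\phi_i(x),v^\star\rangle - t[B\Phi(x)]_i + o(t),$$
$$\int_{-t}^0 \mathcal{L}_i(s,\gamma(s),v^\star)\,\dd s = tL_i(x,v^\star) + o(t),$$
which sum to $\phi_i(x) - t\bigl(H_i(x,D\phi_i(x))+[B\Phi(x)]_i\bigr)+o(t)$, uniformly in $x\in\T^N$. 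This gives $\limsup$.

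For the lower bound, let $\gamma_t$ be a minimizer. Writing $\bar v_t=(\gamma_t(0)-\gamma_t(-t))/t$ (so $|\bar v_t|\leqslant M$) and using $\gamma_t(-t)=x-t\bar v_t$, the $C^1$ regularity of $\Phi$ and uniform continuity of $B\Phi$ on $\T^N$ yield
$$\big[\e^{-tB}\Phi(\gamma_t(-t))\big]_i = \phi_i(x) - t\langle D\phi_i(x),\bar v_t\rangle - t[B\Phi(x)]_i + o(t).$$
For the integral, the uniform continuity of $\mathcal{L}_i$ on $[-1,0]\times\T^N\times\bar B_M$ combined with $|\gamma_t(s)-x|\leqslant Mt$ and $\mathcal{L}_i(0,x,v)=L_i(x,v)$ gives
$$\int_{-t}^0 \mathcal{L}_i(s,\gamma_t(s),\dot\gamma_t(s))\,\dd s = \int_{-t}^0 L_i(x,\dot\gamma_t(s))\,\dd s + o(t)\geqslant tL_i(x,\bar v_t)+o(t),$$
by Jensen's inequality applied to the convex function $L_i(x,\cdot)$. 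Summing,
$$[W(t)\Phi(x)]_i - \phi_i(x) \geqslant -t[B\Phi(x)]_i + t\bigl(L_i(x,\bar v_t)-\langle D\phi_i(x),\bar v_t\rangle\bigr)+o(t),$$
and the bracket is bounded below by $-H_i(x,D\phi_i(x))$ by definition of the Legendre transform. Dividing by $t$ and passing to the limit completes the $\liminf$.

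The main obstacle I anticipate is the uniform control of all the $o(t)$ terms in $x$, which is what upgrades pointwise convergence to the uniform convergence implicit in the statement. This is resolved at once by the uniform velocity bound from Theorem~\ref{CS} together with compactness of $\T^N$ and of the maximizer $v^\star(x,i)$ in $x$ (itself uniform by strict convexity and superlinearity of $L_i$); once curves are confined to a compact velocity ball, every remainder term comes from uniform continuity of a continuous function on a compact set.
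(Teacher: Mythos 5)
Your proposal is correct, and it takes a genuinely different route from the paper. The paper proves consistency by invoking the characteristics: it picks a minimizer $\gamma_t$, observes that $(\gamma_t,p_t)$ (with $p_t$ the spatial differential of the value function along $\gamma_t$) solves Hamilton's equations for $\mathcal H_i$, and then rewrites the whole quantity $[W(t)\Phi-\Phi]_i$ as $-[(\Phi-\e^{-tB}\Phi)(x)]_i-\int_{-t}^0\mathcal H_i(s,\gamma_t(s),p_t(s))\,\dd s$ by using the PDE satisfied by the value function along the minimizing curve; Taylor expansion then finishes in one stroke, with uniformity again coming from the uniform Lipschitz bounds on $\gamma_t$ and $p_t$. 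Your argument is instead purely variational and split into two one-sided bounds: the upper bound from a constant-velocity competitor built out of the Legendre maximizer $v^\star$, the lower bound from Jensen's inequality applied to the convex function $L_i(x,\cdot)$ together with the Legendre inequality $L_i(x,\bar v_t)-\langle D\phi_i(x),\bar v_t\rangle\geqslant -H_i(x,D\phi_i(x))$. This buys you elementarity: you only use the existence and uniform Lipschitz bound of minimizers (Theorem~\ref{CS}(2)), not the $C^1$-regularity of $\gamma_t$, nor the differentiability of the value function along $\gamma_t$, nor the Hamilton-flow identity cited from Cannarsa--Sinestrari. The paper's proof is somewhat shorter once those regularity facts are granted, since there is no case split, but your route would survive weaker smoothness assumptions on the Lagrangians. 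Both approaches locate the source of uniformity in $x$ and $i$ at the same place — the uniform velocity bound $M$ — so the claim of uniform convergence is handled equivalently.
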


\begin{proof}
Let us fix $x\in \T^N$ and $i\in \ind$. For $t\leqslant T$, let us denote by $\gamma_t : [-t,0] \to \T^N$ a curve realizing the minimum in \eqref{twistedLO} for the $i$-th equation. Recall that the curve $\gamma_t$ is then $C^1$. Moreover, for any $s\in [-t,0]$, the function $x\mapsto v(s,x) := T_{\LL_i(\cdot-t,\cdot,\cdot)}^{s,0}[\e^{-tB}\uu^0]_i (x)$ is differentiable at $\gamma_t(-t+s)$ and setting $p_t(-t+s)$ this differential, the couple $(\gamma_t,p_t)$ solves Hamilton's equations with Hamiltonian function $\mathcal H_i (\cdot-t,\cdot,\cdot)$ \big(associated to the Lagrangian $\LL_i(\cdot-t,\cdot,\cdot)$\big), see \cite[Theorem 6.3.3 and 6.4.7]{CaSi00}.

We may then compute 
\begin{align*}[W(t)\Phi - \Phi]_i (x) & = \Big[\e^{-tB}\Phi\big(\gamma_t(-t)\big) - \Phi\big(\gamma(0)\big) + \int_{-t}^0 \e^{sB} \L\big(\gamma_t(s),\dot\gamma_t(s)\big)\dd s\Big]_i\\
&=-\Big[\big(\Phi-\e^{-tB}\Phi\big)\big(\gamma(0)\big)\Big]_i \\
& \qquad \qquad -\int_{-t}^0\Big( \frac{\dd}{\dd s} v \big(s,\gamma_t(s)\big) 
 -\Big[ \e^{sB} \L\big(\gamma_t(s),\dot\gamma_t(s)\big)\dd s\Big]_i\Big) \dd s \\
&= - \Big[\big(\Phi-\e^{-tB}\Phi\big)(x)\Big]_i -\int_{-t}^{0} \mathcal H_i\big(s, \gamma_t(s),p_t(s)\big)\dd s \\\ &= [-tB\Phi(x)]_i - t \mathcal H_i(0 ,x, D_x\varphi_i) + t\varepsilon (t) \\&=  [-tB\Phi(x)]_i - t  H_i(0 ,x, D_x\varphi_i) + t\varepsilon (t),
\end{align*}
where $\varepsilon $ is a function going to $0$ as $t\to 0$.
Note that the function $\varepsilon $ depends on $t$, $x$, $i$, but due to the fact that the Lipschitz constant of $\gamma_t$ (and of $p_t$)  depends only on $\| D\Phi\|_\infty$ the convergence of $\varepsilon$ to $0$ is uniform with respect to $i$ and $x$.

This proves the proposition.

\end{proof}

As proved in \cite{BaSo}, consistency, monotonicity and continuity are enough to ensure that Theorem \ref{the} holds. For the sake of completeness, we reproduce the proof (adapted to our setting) below:

\begin{proof}[proof of Theorem \ref{the}]
Let $\uu^0 : \T^N \to \R^d$ be a Lipschitz continuous  initial data and $T>0$. Let $\uu(t,x) = S(t)\uu^0 (x)$, for $(t,x)\in [0,T]\times \T^N$.  For $n>0$ we define 
$\uu_n : [0,T] \times \T^N \to \R^d$ by 
$$\uu_n(t,x) =W_n(t)\uu^0(x) =  W(r) \circ W(T/2^n)^k \uu^0 (x),$$
where $kT/2^n +r = t$ and $0<r\leqslant T/2^n$, and $\uu_n(0,\cdot) = \uu^0$.  We will in fact prove that $\uu_n$ converges to $\uu$ as $n\to +\infty$.

We introduce the relaxed semi--limits, let us set $\underline \uu (t,x) = \liminf \uu_n(t_n,x_n)$ and $\overline \uu(t,x) = \limsup \uu_n(t_n,x_n)$ where the liminf and limsup are taken with respect to sequences $t_n \to t$ and $x_n \to x$.

Obviously, $\underline \uu \leqslant \overline \uu$. The core of the proof is to show that $\overline \uu$ (resp. $\underline \uu$) is a subsolution (resp. supersolution) of \eqref{intro evo wcs}. Proposition \ref{comparison} will then entail the reverse inequality, proving the convergence.

Let us prove that $\overline \uu$ is a subsolution, the proof for $\underline \uu$ being the same. Note that $\overline \uu$ is upper semi--continuous.
Let $i \in \ind$, $(t_0,x_0) \in (0,T) \times  \T^N$ and $\phi : [0,T) \times\T^N \to \R$ be a $C^1$ function such that $\overline u_i -\phi \leqslant 0$ attains a global strict maximum at $(t_0,x_0)$ by vanishing at this point. It follows there exists an extraction $m_n$ and points $(t_n,x_n)$ converging to $(t_0,x_0)$ such that $(\uu_{m_n})_i -\phi$ attains a global maximum at $(t_n,x_n) $ and such that $(\uu_{m_n})_i(t_n,x_n) \to \overline u_i(t_0,x_0)$. Denoting by $\xi_n = (\uu_{m_n})_i(t_n,x_n)  -\phi(t_n,x_n) $ we obtain that $\xi_n \to 0$ and that 
$(\uu_{m_n})_i \leqslant \phi +\xi_n$. Write $kT/2^{m_n} +r_n = t_n$ and $0<r_n\leqslant T/2^{m_n}$.

Let us fix an $\epsilon >0$ and construct a $C^1$ test function $\Phi : [0 , T]\times \T^N \to \R^d$ as follows: 
$\phi_i = \phi$, $\phi_j \geqslant \overline u_j +\eps/2  $ for $j\neq i$ and finally $\Phi(t_0,x_0) \leqslant \overline \uu(t_0,x_0)+\varepsilon\1$. Up to taking $n$ large enough, we still have the following property: $(\uu_{m_n})_j -\phi_j$ attains a global maximum at $(i,t_n,x_n) $.

We then compute
\begin{align*}
0 & =\frac{1}{r_n} \Big[(\uu_{m_n})(t_n,x_n) - W(r_n)\big[\uu_{m_n}(kT/2^{m_n}, \cdot)\big](x_n)\Big]_i \\
&\geqslant \frac{1}{r_n}\Big(\varphi (t_n,x_n) +\xi_n- W(r_n)\big[\Phi(kT/2^{m_n}, \cdot)+\xi_n\1\big]_i(x_n)\Big)\\
&= \frac{1}{r_n}\Big( \phi (t_n,x_n) -\phi(kT/2^{m_n}, x_n) +\phi(kT/2^{m_n}, x_n)  - W(r_n)\big[\Phi(kT/2^{m_n}, \cdot)\big]_i(x_n)\Big)\\
&\quad \qquad \qquad+  \frac{\xi_n}{r_n}\Big( 1 - (\e^{-r_nB}\1)_i\Big) 
\end{align*}
As $r_n, \xi_n\to 0$ the last term $ \frac{\xi_n}{r_n}\Big( 1 - (\e^{-r_nB}\1)_i\Big)$ converges to $0$ as $n\to +\infty$.

By making use of Proposition \ref{consistency} and letting $n\to +\infty$ we infer that 
\begin{align*}
0&\geqslant \frac{\partial \phi}{\partial t}(t_0,x_0)+ H_i\big(x_0, D_x \phi( t_0,x_0)\big)+[B\Phi(t_0,x_0)]_i     \\
& \geqslant \frac{\partial \phi}{\partial t}(t_0,x_0)+ H_i\big(x_0, D_x \phi( t_0,x_0)\big)+[B\uu(t_0,x_0)]_i  -\varepsilon \sum_{j\neq i} |b_{ij}|.
\end{align*}

Letting $\varepsilon \to 0$ shows that $\overline \uu$ is a subsolution.
\end{proof}

\section{Further properties and extensions of Theorem \ref{the}}
In this final section, we discuss some nice properties of the twisted operators $W$. Then we show how to weaken the hypotheses of our main theorem and propose some possible variations.
\subsection{Properties of $W$}\label{further}

\begin{prop}\label{majoration sous-sol}
If a  function $\uu : [0,T]\times\T^N \to \R^d$ is a   Lipschitz subsolution of the evolutionary equation then for any $t\leqslant T$ and any absolutely continuous curve $\gamma : [-t,0] \to \T^N$ the following holds:
$$\uu\big(t,\gamma(t)\big) -\e^{-tB}\uu\big(0,\gamma(0)\big) \leqslant \int_{0}^t \e^{(s-t)B} \L\big(\gamma(s),\dot\gamma(s)\big) \dd s.$$

In particular, $\uu\big( t , \gamma(t) \big) \leqslant W(t)\uu(0,\cdot)$. More generally, for any positive integer $n$ and positive times $t_1,\cdots , t_n$ such that $\sum t_i = t$, 
$$\uu\big( t , \gamma(t) \big) \leqslant W(t_n)\circ \cdots \circ W(t_1)\uu(0,\cdot).$$
\end{prop}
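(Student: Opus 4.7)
The plan is to differentiate an appropriately twisted composition of $\uu$ with $\gamma$ along the curve, and to combine the subsolution inequality with Fenchel's inequality componentwise. Concretely, I would set $\varphi(s) = \e^{(s-t)B}\uu(s,\gamma(s))$ for $s\in[0,t]$. In the smooth case, a direct computation gives
$$\varphi'(s) = \e^{(s-t)B}\bigl[B\uu + \partial_t\uu + D_x\uu\cdot\dot\gamma\bigr](s,\gamma(s)).$$
The $i$-th subsolution inequality reads $\partial_t u_i + H_i(x,D_xu_i) + (B\uu)_i \leqslant 0$, while Fenchel's inequality gives $\langle D_xu_i,\dot\gamma\rangle \leqslant H_i(x,D_xu_i) + L_i(x,\dot\gamma)$. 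Adding these and cancelling $H_i$, the $i$-th component of the bracket is bounded above by $L_i(\gamma,\dot\gamma)$, hence $\varphi'(s) \leqslant \e^{(s-t)B}\L(\gamma,\dot\gamma)$ componentwise, the multiplication by $\e^{(s-t)B}$ preserving the inequality thanks to Proposition \ref{sto} applied with $t-s\geqslant 0$. Integration on $[0,t]$ then yields the first displayed inequality of the proposition.

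The main technical obstacle will be to legitimise this computation when $\uu$ is merely Lipschitz rather than $C^1$. Two routes present themselves: (a) invoking the fact, specific to Hamilton--Jacobi systems with Hamiltonians convex in the momentum, that a Lipschitz viscosity subsolution is a Lebesgue-a.e.\ classical subsolution, combined with a chain rule for Lipschitz functions along absolutely continuous curves; or (b) regularising $\uu$ by mollification to a smooth $\uu^\eps$, carrying out the computation for $\uu^\eps$ while observing that the convexity of the $H_i$ together with uniform continuity of the data on the compact support of $\gamma$ degrades the resulting inequality only by an error vanishing with $\eps$, and then passing to the limit. I would favour route (b), which sidesteps delicate measure-theoretic issues concerning the trace of $\gamma$ on the non-differentiability set of $\uu$; one may furthermore assume $\int_0^t|L_i(\gamma,\dot\gamma)|\,\dd s<+\infty$ for each $i$ (else the claimed inequality is trivial), and the superlinearity coming from hypothesis (L2) then supplies the integrability needed to pass to the limit by dominated convergence.

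Once the integral inequality is established, the pointwise bound $\uu(t,\cdot) \leqslant W(t)\uu(0,\cdot)$ follows by taking the infimum over admissible curves with prescribed endpoint at time $t$, after the harmless change of variables $s\mapsto s-t$ that matches the domain convention used in the definition \eqref{twistedLO} of $W$. For the iterated statement with positive times $t_1,\dots,t_n$ summing to $t$, I would induct on $n$: since $\uu$ remains a Lipschitz subsolution after time translation, the single-step result applied on the interval $[t_1+\cdots+t_{n-1},t]$ gives $\uu(t,\gamma(t)) \leqslant W(t_n)\bigl(\uu(t_1+\cdots+t_{n-1},\cdot)\bigr)(\gamma(t))$, and the induction hypothesis combined with the monotonicity of $W(t_n)$ established in the preceding proposition closes the argument.
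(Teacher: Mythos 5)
Your proof is correct and follows the same core computation as the paper: differentiate $\varphi(s)=\e^{(s-t)B}\uu\big(s,\gamma(s)\big)$, combine the $i$-th viscosity subsolution inequality with Fenchel's inequality componentwise, and use the entrywise non-negativity of $\e^{(s-t)B}$ (Proposition~\ref{sto} with $t-s\geqslant 0$) to propagate those scalar bounds through the matrix multiplication before integrating. The one point where you diverge from the paper is the treatment of the nonsmoothness of $\uu$: the paper fixes $\uu$, uses that a Lipschitz viscosity subsolution satisfies the differential inequality classically at its points of differentiability (Rademacher), and approximates $\gamma$ by nearby curves along which $\uu$ is differentiable for a.e.\ $s$; your preferred route (b) instead mollifies $\uu$ to a smooth $\uu^\eps$, exploiting the convexity of the $H_i$ in $p$ and the constancy of $B$ to keep the subsolution inequality up to a uniform $o_\eps(1)$ error, then passes to the limit. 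Both are standard regularization devices and both close the argument; yours avoids the measure-theoretic care needed to justify a.e.\ differentiability along the (Lebesgue-null) trace of $\gamma$, at the small price of tracking an error term through the integral inequality. Your inductive treatment of the iterated bound, via time-translation of the subsolution and the monotonicity of $W(t_n)$, matches the paper's ``straightforward induction.''
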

\begin{dimo}
Assume that $\uu$ is differentiable almost everywhere on the image of $\gamma$, then
\begin{align*}
\uu\big(t,\gamma(t)\big) -\e^{-tB}\uu\big(0,\gamma(0)\big)&= \int_0^t\frac{\dd}{\dd s} \e^{B(s-t)}\uu \big(s ,\gamma(s)\big)\dd s \\
&= \int_0^t \e^{(s-t)B}\Big[  B\uu\big(s,\gamma(s)\big)  +\frac{\partial \uu}{\partial t} \big(s,\gamma(s)\big)\\
&\qquad\qquad\qquad\qquad +D_x \uu\big(s,\gamma(s)\big)\cdot \dot \gamma(s)     \Big]\dd s   \\
&\leqslant  \int_0^t \e^{(s-t)B}\Big[  B\uu\big(s,\gamma(s)\big) + \frac{\partial \uu}{\partial t} \big(s,\gamma(s)\big)\\
&\qquad\qquad\qquad\qquad +\HH\big( \gamma(s),D_x \uu\big(s,\gamma(s)\big)\big)+\L\big(\gamma(s), \dot \gamma(s)   \big)  \Big]\dd s \\
&\leqslant  \int_{0}^t \e^{(s-t)B} \L\big(\gamma(s),\dot\gamma(s)\big) \dd s.
\end{align*}
Note that for the last inequality, we use the fact that all entries of the matrices  $\e^{(s-t)B}$ are non negative. The general case is then proved by an approximation argument of $\gamma$ by curves on which $\uu$ is differentiable almost everywhere.

 The second point is then the result of a straightforward induction on $n$.
\end{dimo}
\begin{oss}
It can actually be proved that the converse is also true in the above Proposition (see \cite{DSZ}).
\end{oss}

\begin{prop}
Let $\uu : \T^N\to \R^d$ then, for all $s,t\geqslant 0$,
$$W(s+t)\uu \geqslant W(s)\circ W(t) \uu.$$
\end{prop}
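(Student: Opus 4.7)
The plan is to compare, for a fixed component $i$, a trial curve for $W(s+t)\uu(x)$ with a pair of trial curves for $W(s)\circ W(t)\uu(x)$ obtained by splitting the first one at time $-s$. Concretely, let $\gamma:[-(s+t),0]\to\T^N$ be absolutely continuous with $\gamma(0)=x$; set $\eta=\gamma|_{[-s,0]}$ and $\mu(\tau)=\gamma(\tau-s)$ for $\tau\in[-t,0]$, so that $\mu(0)=\eta(-s)$. The integral in the definition of $W(s+t)\uu$ applied to $\gamma$ splits as
$$
\int_{-(s+t)}^0\e^{\sigma B}\L(\gamma,\dot\gamma)\,\dd\sigma=\int_{-s}^0\e^{\sigma B}\L(\eta,\dot\eta)\,\dd\sigma+\e^{-sB}\int_{-t}^0\e^{\tau B}\L(\mu,\dot\mu)\,\dd\tau,
$$
where the second piece comes from the substitution $\sigma=\tau-s$ and the identity $\e^{(\tau-s)B}=\e^{-sB}\e^{\tau B}$. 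Combining with $\e^{-(s+t)B}=\e^{-sB}\e^{-tB}$, the whole functional defining $W(s+t)\uu(x)$ evaluated at $\gamma$ reads
$$
\int_{-s}^0\e^{\sigma B}\L(\eta,\dot\eta)\,\dd\sigma+\e^{-sB}\Bigl[\e^{-tB}\uu\bigl(\mu(-t)\bigr)+\int_{-t}^0\e^{\tau B}\L(\mu,\dot\mu)\,\dd\tau\Bigr].
$$

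Now the bracketed quantity is, componentwise, an admissible value in the infimum defining $W(t)\uu(\eta(-s))$; hence for every $j\in\{1,\dots,d\}$,
$$
\bigl[W(t)\uu(\eta(-s))\bigr]_j\;\leqslant\;\Bigl[\e^{-tB}\uu\bigl(\mu(-t)\bigr)+\int_{-t}^0\e^{\tau B}\L(\mu,\dot\mu)\,\dd\tau\Bigr]_j.
$$
The crucial step is then Proposition \ref{sto}: since every entry of $\e^{-sB}$ is non-negative, multiplying by this matrix preserves componentwise inequalities. Applying this to the $i$-th row, the functional at $\gamma$ is bounded below by
$$
\bigl[\e^{-sB}W(t)\uu(\eta(-s))\bigr]_i+\Bigl[\int_{-s}^0\e^{\sigma B}\L(\eta,\dot\eta)\,\dd\sigma\Bigr]_i,
$$
which is exactly an admissible value in the infimum defining $[W(s)\bigl(W(t)\uu\bigr)(x)]_i$. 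Taking the infimum over $\gamma$ on the left yields $[W(s+t)\uu(x)]_i\geqslant[W(s)\circ W(t)\uu(x)]_i$, and since $i$ and $x$ are arbitrary the proposition follows.

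The only place where the argument is not a pure bookkeeping exercise is the passage from the componentwise inequality on $W(t)\uu(\eta(-s))$ to the inequality multiplied by $\e^{-sB}$; this is precisely where the sign hypothesis on the coupling matrix $B$ (through Proposition \ref{sto}) is indispensable. Note that the reverse inequality cannot be expected in general, because $W(s)\circ W(t)\uu$ allows the choice of a different minimizing curve for each component at the intermediate time $-s$, whereas $W(s+t)\uu$ must use a single curve for all components simultaneously—the extra flexibility can only decrease the value, which is consistent with the direction of the inequality we have just established.
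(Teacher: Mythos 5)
Your proof is correct and is essentially the same argument as the paper's: split the trial curve at time $-s$, factor $\e^{-(s+t)B}=\e^{-sB}\e^{-tB}$ together with the corresponding shift in the integral, and then use the non-negativity of the entries of $\e^{-sB}$ (Proposition \ref{sto}) to replace the inner bracket by $W(t)\uu(\eta(-s))$ before recognizing an admissible competitor for $W(s)\circ W(t)\uu(x)$. Your write-up is a bit more explicit than the paper (arbitrary trial curve rather than a minimizer, the role of Proposition \ref{sto} spelled out, general component $i$ rather than $i=1$), but the route is the same.
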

\begin{dimo}
Let $\gamma$ be a curve realizing the infimum for the first component of $W(s+t)\uu(x)$. We then have 
\begin{align*}
[W(s+t)\uu(x)]_1 &= \Big[\e^{-(t+s)B}\uu\big(\gamma(-(t+s))\big)+\int_{-(t+s)}^0 \e^{\sigma B} \L\big(\gamma(\sigma ),\dot\gamma(\sigma )\big) \dd \sigma\Big]_1\\
&=\Big[\e^{-sB}\Big(\e^{-tB} \uu\big(\gamma(-(t+s))\big)+\int_{-t}^0 \e^{\sigma B} \L\big(\gamma(\sigma-s ),\dot\gamma(\sigma-s )\big) \dd \sigma\Big) \\
&\quad\qquad\qquad\qquad\qquad\qquad \qquad +\int_{-s}^0 \e^{\sigma B} \L\big(\gamma(\sigma ),\dot\gamma(\sigma )\big) \dd \sigma \Big]_1 \\
&\geqslant \Big[\e^{-sB} W(t)\uu\big(\gamma(-s)\big) + \int_{-s}^0 \e^{\sigma B} \L\big(\gamma(\sigma ),\dot\gamma(\sigma )\big) \dd \sigma \Big]_1 \\
&\geqslant [W(s)\circ W(t) \uu]_1.
\end{align*}
\end{dimo}

Notice that in the previous inequality, there is no hope to obtain an equality, for the curve $\gamma$ has no reason to realize the infimum in $W(s+t)\uu(x)$ on other coordinates than the first one.

\begin{cor}
The sequence $W_n(t)\uu$  is decreasing with $n$. 
\end{cor}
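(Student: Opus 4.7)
The plan is to combine the super-multiplicativity inequality $W(s+t)\uu \geqslant W(s)\circ W(t)\uu$ from the previous proposition with the monotonicity of the operators $W(\tau)$ (which preserves the order of vector-valued functions coordinate-wise). Specifically, the goal is to show $W_{n+1}(t)\uu \leqslant W_n(t)\uu$ by relating the partition underlying $W_{n+1}(t)$ to a refinement of the partition underlying $W_n(t)$.

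First I would write $t = k T/2^n + s$ with $0 < s \leqslant T/2^n$, so that $W_n(t) = W(s)\circ W(T/2^n)^k$. Since $T/2^n = 2 \cdot T/2^{n+1}$, the super-multiplicativity inequality gives $W(T/2^n)\vv \geqslant W(T/2^{n+1})\circ W(T/2^{n+1})\vv$ for any $\vv$. Iterating $k$ times and using monotonicity of each $W(T/2^{n+1})$, I obtain $W(T/2^n)^k \uu \geqslant W(T/2^{n+1})^{2k}\uu$, and then applying the (monotonic) operator $W(s)$ preserves this inequality:
\begin{equation*}
W_n(t)\uu = W(s)\circ W(T/2^n)^k \uu \geqslant W(s)\circ W(T/2^{n+1})^{2k}\uu.
\end{equation*}

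Next I would distinguish two cases according to whether $s \leqslant T/2^{n+1}$ or $s > T/2^{n+1}$. In the first case, $W_{n+1}(t) = W(s)\circ W(T/2^{n+1})^{2k}$ by definition, and the inequality above is precisely $W_n(t)\uu \geqslant W_{n+1}(t)\uu$. In the second case, write $s = T/2^{n+1} + s'$ with $0 < s' \leqslant T/2^{n+1}$, so that $W_{n+1}(t) = W(s')\circ W(T/2^{n+1})^{2k+1}$. Applying the super-multiplicativity inequality once more gives $W(s)\vv \geqslant W(s')\circ W(T/2^{n+1})\vv$, and plugging this into the previous display yields
\begin{equation*}
W_n(t)\uu \geqslant W(s')\circ W(T/2^{n+1})^{2k+1}\uu = W_{n+1}(t)\uu.
\end{equation*}

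The main obstacle is really just bookkeeping, namely making sure the two cases on $s$ are handled correctly and that monotonicity is invoked at each composition (so that inequalities chain through the operators). No subtle analytic step is required beyond the already proved super-multiplicativity and monotonicity properties of $W$.
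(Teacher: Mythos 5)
Your proof is correct and takes essentially the same approach as the paper: both combine the super-multiplicativity inequality $W(s+t)\uu\geqslant W(s)\circ W(t)\uu$ with monotonicity of $W$ to refine the dyadic partition, handling the two cases on whether $s$ falls in $(0,T/2^{n+1}]$ or $(T/2^{n+1},T/2^n]$. Your write-up is merely more explicit in the bookkeeping (and in fact handles the boundary case $s=T/2^{n+1}$ slightly more carefully than the paper, which writes $s<T/2^{n+1}$ for the first case).
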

\begin{dimo}
Using notations of the corollary, let $t = kT/2^n + s = k'T/2^{n+1} + s'$. Either $s<T/2^{n+1}$, then $k' = 2k$ and $s=s'$, or $T/2^{n+1}\leqslant s<T/2^{n}$, then $k' = 2k+1$ and $s-T/2^{n+1}=s'$. Let us deal with the first case, the second one being similar.
$$W_{n+1}(t)\uu = W^s\circ W(T/2^{n+1})^{2k}\uu = W^s\circ\Big(W(T/2^{n+1})^2\Big)^{k}\uu\leqslant W^s\circ W(t/2^n)^{k}\uu = W_n(t) \uu.$$
Moreover, by proposition \ref{majoration sous-sol} it is greater than $S(t)\uu$.
\end{dimo}

\begin{oss}\label{remarque}
\begin{enumerate}
\item
The previous results explain our choice of subdivision of the interval $[0,T]$ in our construction of $W_n$. Indeed, Theorem \ref{the} holds true for any sequence of partitions such that the length of the subdivisions uniformly converge to $0$. However, taking nested partitions (as we did) gives a decreasing family of operators. 
\item The corollary, along with Proposition \ref{majoration sous-sol} immediately imply that $W_n \uu$ converges given a Lipschitz function $\uu$. One alternative idea of proof would then be to establish that the limit is itself a subsolution. It would hence be the solution by maximality. However, to do so, we need to be able to keep track of the Lipschitz constants of the $W_n\uu$ which we were not able to do without requiring much stronger hypotheses on the Lagrangians. 
\end{enumerate}

\end{oss}

\subsection{Weakening the hypotheses of Theorem \ref{the}}\label{weakening}

The following Theorem weakens the Lipschitz hypothesis on the initial data $\uu^0$:

\begin{teorema}\label{C^0}
Assume $\uu^0 : \T^N \to \R^d$ is a continuous function, then the sequence $ W(t/2^n)^{2^n}\uu^0$ converges to $S(t)\uu_0$.
\end{teorema}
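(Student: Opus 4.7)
The plan is to leverage Theorem \ref{the} by approximating the continuous initial datum $\uu^0$ by a sequence of Lipschitz data $\uu^0_k$ and then exploiting the fact that both the semi--group $S(t)$ and the iterated operators $W(t/2^n)^{2^n}$ are $1$--Lipschitz in $\uu^0$ for the supremum norm. Since $\T^N$ is compact, one can produce Lipschitz functions $\uu^0_k : \T^N \to \R^d$ (for instance via component--wise convolution with a smooth mollifier on $\T^N$) such that $\|\uu^0_k - \uu^0\|_\infty \to 0$ as $k\to +\infty$.

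The first technical step is to establish that $S(t)$ is itself $1$--Lipschitz for the supremum norm. For this, observe that if $\uu(t,x) = S(t)\vv^0(x)$ is a solution of \eqref{intro evo wcs}, then for any constant $c\in \R$ the function $\tilde\uu(t,x) := \uu(t,x) + c\,\e^{-tB}\1$ again solves \eqref{intro evo wcs}: indeed, $\partial_t \tilde\uu = \partial_t \uu - cB\e^{-tB}\1$ while $B\tilde\uu = B\uu + cB\e^{-tB}\1$, so the $B$--terms cancel and $D_x \tilde\uu = D_x \uu$. By the uniqueness part of Proposition \ref{comparison}, this forces $S(t)(\vv^0 + c\1) = S(t)\vv^0 + c\,\e^{-tB}\1$. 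Combining this identity with the monotonicity of $S(t)$ (again a direct consequence of Proposition \ref{comparison}) and with Corollary \ref{decroissant}, one concludes as for $W(t)$: if $\|\uu^0 - \vv^0\|_\infty \leqslant c$ then $\uu^0 \leqslant \vv^0 + c\1$ and $\vv^0 \leqslant \uu^0 + c\1$, hence
\begin{equation*}
S(t)\uu^0 \leqslant S(t)\vv^0 + c\,\e^{-tB}\1 \leqslant S(t)\vv^0 + c\1,
\end{equation*}
and symmetrically, giving $\|S(t)\uu^0 - S(t)\vv^0\|_\infty \leqslant c$. The analogous $1$--Lipschitz estimate for $W(t/2^n)^{2^n}$ in the sup norm is already recorded in the Proposition stating the properties of $W$ (and trivially passes to iterates).

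With these two $1$--Lipschitz properties in hand, the conclusion is a standard $\eps/3$ argument. Given $\eps>0$, first choose $k$ so that $\|\uu^0_k - \uu^0\|_\infty < \eps/3$. Since $\uu^0_k$ is Lipschitz, Theorem \ref{the} applies and gives some $n_0$ such that for all $n\geqslant n_0$, $\|W(t/2^n)^{2^n}\uu^0_k - S(t)\uu^0_k\|_\infty < \eps/3$. Then by the triangle inequality
\begin{equation*}
\|W(t/2^n)^{2^n}\uu^0 - S(t)\uu^0\|_\infty \leqslant \|\uu^0 - \uu^0_k\|_\infty + \|W(t/2^n)^{2^n}\uu^0_k - S(t)\uu^0_k\|_\infty + \|\uu^0_k - \uu^0\|_\infty < \eps,
\end{equation*}
proving uniform convergence.

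The only non--formal point is the verification that $S(t)$ contracts for the sup norm; this is the step I expect to require the most care, because it is not among the properties previously stated in the paper and one needs to combine the twisted translation invariance of the system with the comparison principle. Once this is observed, the extension from Lipschitz to continuous initial data is purely a density argument.
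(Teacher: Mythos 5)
Your proof is correct and follows essentially the same route as the paper: both rely on the $1$--Lipschitz property of $S$, $W$, and $W_n$ in the sup norm together with density of Lipschitz functions, then conclude by a triangle--inequality argument. The only difference is that you actually verify the $1$--Lipschitz property of $S(t)$ (via the twisted translation identity $S(t)(\vv^0+c\1)=S(t)\vv^0+c\,\e^{-tB}\1$, monotonicity from Proposition \ref{comparison}, and Corollary \ref{decroissant}), whereas the paper merely asserts it; that filled-in detail is correct and is exactly what the paper's terse proof leaves to the reader.
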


\begin{proof}
All the operators $S$, $W$ and $W_n$ are  $1$-Lipschitz, hence approximating $\uu^0$ by Lipschitz functions and using Theorem \ref{the} yields that again, 
$$ W(t/2^n)^{2^n}\uu^0 \to S(t)\uu^0.$$

\end{proof}

Finally, we show how to weaken the hypotheses on the Lagrangians:

\begin{teorema}\label{L_ilip}
Assume that the Lagrangians $L_i$ are Lipschitz continuous, convex in the $p$ variable and that there exists a Nagumo function $\theta$ verifying \eqref{nagumo} for which the $L_i$ satisfy \eqref{L2} in the almost everywhere sense.

Then the conclusions of Theorems \ref{the} and \ref{C^0} still hold.
\end{teorema}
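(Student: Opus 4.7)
The plan is to reduce Theorem \ref{L_ilip} to the good--Lagrangian case of Theorems \ref{the} and \ref{C^0} by an approximation on the Lagrangians themselves. For each $\eps>0$ I would build $L_i^\eps$ by mollifying $L_i$ in the $(x,v)$ variables with a standard convolution kernel and adding a small strictly convex perturbation $\eps|v|^2/2$. Because (L2) is assumed almost everywhere, convolution preserves it pointwise with the same Nagumo function $\theta$ and constants $A$, $c_0$ up to an $\eps$--vanishing error, and the perturbation only strengthens strict convexity while leaving superlinearity intact (after possibly enlarging $\theta$ into one that still satisfies \eqref{nagumo}). So each $L_i^\eps$ is a good Lagrangian with parameters controlled uniformly in $\eps$; moreover $L_i^\eps\to L_i$ uniformly on $\T^N\times\{|v|\leqslant M\}$ for every $M$, and the corresponding $H_i^\eps$ converge locally uniformly to $H_i$. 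Write $W^\eps$, $W_n^\eps$, $S^\eps$ for the twisted Lax--Oleinik operator, its iterate, and the viscosity semigroup attached to $(L_i^\eps)$.

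Three ingredients then need to be assembled. First, for each fixed $\eps$ the previous sections directly give $W^\eps(t/2^n)^{2^n}\uu^0\to S^\eps(t)\uu^0$ as $n\to\infty$. Second, $S^\eps(t)\uu^0\to S(t)\uu^0$ uniformly as $\eps\to 0$, by the standard stability of viscosity solutions under locally uniform convergence of the Hamiltonians, using Proposition \ref{comparison} in place of the usual scalar comparison principle. Third, and this is the crux, the uniform--in--$n$ estimate
$$\|W^\eps(t/2^n)^{2^n}\uu^0-W(t/2^n)^{2^n}\uu^0\|_\infty\leqslant t\,\delta_M(\eps),$$
where $\delta_M(\eps)=\max_i\sup_{\T^N\times\{|v|\leqslant M\}}|L_i^\eps-L_i|$ and $M$ is an $\eps$--independent Lipschitz bound on minimizing curves of $W^\eps$ with driving datum $\uu^0$.

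For the third estimate I would use a minimizer $\gamma$ of $W^\eps(s)\vv$ as a test curve for $W(s)\vv$ (and vice versa): the non--negativity of the entries of $\e^{\sigma B}$ (Proposition \ref{sto}) together with $\e^{\sigma B}\1\leqslant\1$ (Corollary \ref{decroissant}) yield the single--step bound $\|W^\eps(s)\vv-W(s)\vv\|_\infty\leqslant s\,\delta_M(\eps)$. The $1$--Lipschitz property of $W^\eps$ in the sup norm then lets the error be telescoped along $2^n$ iterations with $s=t/2^n$, producing the bound $t\,\delta_M(\eps)$ with \emph{no} loss in $n$. A diagonal argument then closes the proof: given $\eta>0$ I first pick $\eps$ so that both the stability in $\eps$ and the third estimate fall below $\eta/3$, and then for that $\eps$ let $n\to\infty$ using the first ingredient. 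The extension to merely continuous $\uu^0$ follows exactly as in Theorem \ref{C^0} because all operators involved remain $1$--Lipschitz in the sup norm.

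The principal obstacle will be producing the $\eps$--uniform Lipschitz bound $M$ on minimizers of $W^\eps$. This requires the analogue of Theorem \ref{CS}(2) for $L_i^\eps$ with constants depending only on $\theta$, $c_0$, $A$, and the Lipschitz norm of $\uu^0$; the hypotheses in Theorem \ref{L_ilip} are designed precisely so that the Cesari--Clarke--Vinter type estimates underlying Theorem \ref{CS} pass to the mollifications, because the pointwise bounds in (L2) survive convolution (they hold almost everywhere), and the $\eps$--perturbation does not harm them. Once this uniformity is secured, all the remaining steps are stability and a simple diagonal choice of $(\eps,n)$.
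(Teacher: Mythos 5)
Your blueprint --- regularize the Lagrangians, invoke Theorems \ref{the} and \ref{C^0} for the regularizations, then pass to the limit using a stability estimate for $S$ together with a uniform--in--$n$ estimate for the iterated $W$, closing with a diagonal choice of $(\eps,n)$ --- is indeed the paper's strategy. The implementation differs, however, and your version carries an extra technical burden that the paper avoids altogether. The paper works with a \emph{global} sup--norm approximation: if $\|L_i-\widetilde L_i\|_\infty\leqslant\eps$ on all of $\T^N\times\R^N$, then for \emph{every} admissible curve the two actions differ by at most
$$\Big|\int_{-s}^0\big[\e^{\sigma B}(\L-\widetilde\L)\big]_i\,\dd\sigma\Big|\leqslant\int_{-s}^0\eps\,\big[\e^{\sigma B}\1\big]_i\,\dd\sigma\leqslant\eps s,$$
using only Proposition \ref{sto} and Corollary \ref{decroissant}, with \emph{no} information about minimizers (existence, Lipschitz constant, regularity). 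Hence $\|W(s)\vv-\widetilde W(s)\vv\|_\infty\leqslant\eps s$ trivially, which telescopes to $\|W_n(t)\uu-\widetilde W_n(t)\uu\|_\infty\leqslant\eps t$ exactly as you propose. The matching bound $\|S(t)\uu-\widetilde S(t)\uu\|_\infty\leqslant\eps t$ follows from $\|H_i-\widetilde H_i\|_\infty\leqslant\eps$ (monotonicity of the Legendre transform), which makes $\widetilde S(t)\uu\mp\eps t\1$ a sub-- and a supersolution of the original system; Proposition \ref{comparison} then concludes, in place of the abstract stability argument you invoke.

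Your local quantity $\delta_M(\eps)$, introduced because the regularization $L_i\ast\rho_\eps+\eps|v|^2/2$ is not globally sup--norm close to $L_i$, is exactly what forces you to control minimizers, and this is where the gap lies. First, the ``vice versa'' half of your one--step comparison is problematic: testing $W^\eps(s)\vv$ against minimizers of $W(s)\vv$ requires Lipschitz bounds for minimizers of the non--regularized problem driven by the merely Lipschitz $L_i$, and Theorem \ref{CS}(2) does not apply there. (This half can in fact be avoided: arranging $L_i^\eps\geqslant L_i$ pointwise, e.g.\ by adding the constant $\|L_i\ast\rho_\eps-L_i\|_\infty$, gives $W(s)\vv\leqslant W^\eps(s)\vv$ for free, and the reverse bound $W^\eps(s)\vv\leqslant W(s)\vv+s\,\delta_M(\eps)$ follows by evaluating the $\L$--action along a minimizer of $W^\eps$ only --- but you do not make this observation.) Second, even with that fix you still require the $\eps$--uniform Lipschitz constant $M$ for minimizers of $W^\eps$, which you correctly flag as ``the principal obstacle'' yet leave unresolved; showing that the constants in Theorem \ref{CS}(2) are uniform along the family $(L_i^\eps)_\eps$ requires re--examining the Cannarsa--Sinestrari estimates. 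The paper's global sup--norm comparison makes this whole difficulty evaporate, and that is the estimate you should have aimed for.
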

\begin{proof}
The proof follows from the following simple observations. Assume that $\widetilde \L=(\widetilde L_i)_i$ are other Lagrangians such that $\|L_i - \widetilde L_i\|_\infty \leqslant \varepsilon$ for all $1\leqslant i\leqslant d$. We infer, by monotonicity of the Legendre transform, that $\|H_i - \widetilde H_i\|_\infty\leqslant \varepsilon$ for all $1\leqslant i\leqslant d$ (with obvious notations).

We then observe that if $\uu$ is an initial data, then $(t,x)\mapsto \widetilde S(t)\uu(x)-\varepsilon t\1$ is a subsolution for the weakly coupled system \eqref{intro evo wcs}. Hence, by the comparison principle, we conclude that $ \widetilde S(t)\uu-\varepsilon t\1 \leqslant S(t)\uu$. By a symmetric argument, we infer that $\|S(t)\uu - \widetilde S(t)\uu \|_\infty \leqslant \varepsilon t$. 

Moreover, using the explicit formulas defining $W$, $W_n$ and there analogues for $\widetilde \L$ denoted by $\widetilde W$ and $\widetilde W_n$ we obtain that for any continuous initial data $\uu$, we have as well that $\|W(t) \uu - \widetilde  W(t) \uu\| _\infty\leqslant \varepsilon t$ and $\|W_n(t) \uu - \widetilde  W_n(t) \uu\|_\infty \leqslant \varepsilon t$.

Hence, approximating uniformly $\L$ by strictly convex, smooth Lagrangians verifying the hypotheses of theorems \ref{the} and \ref{C^0} gives the result.
\end{proof}

\subsection{An alternative approximation scheme}

We conclude this section by proposing another way of approximating solutions to the weakly coupled system. The proofs being similar (even simpler with some respect) we omit them and leave them as an exercise to the motivated reader.

As in some respect, the structure of the schemes below are more simple, the proofs also work in the case of coupling matrices depending on the space variable. We henceforth consider a continuous, coupling matrix valued function $B: \T^N \to \mathcal M_d(\R)$.

\begin{definition}
Given a continuous initial condition $\uu^0$, we define the operator
$$
\mathcal W(t)\uu^0 (x) = \inf_{\substack{\gamma : [-t,0]\to \T^N \\ \gamma(0)=x }} \e^{-tB(x)}\uu^0\big(\gamma(-t)\big) + \int_{-t}^0  \L\big(\gamma(s),\dot\gamma(s)\big)\dd s.
$$
\end{definition}

\begin{teorema}\label{thebis}
Let $\uu^0:\T^N \to \R^d$ be a continuous function, then for any $t\geqslant 0$, the following holds:
$$S(t)\uu = \lim_{n\to +\infty} \mathcal W(t/2^n)^{2^n}\uu.$$
\end{teorema}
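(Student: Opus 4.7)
The plan is to verify, for the new operator $\mathcal W$, exactly the three abstract properties (monotonicity, translation by a constant, and consistency with \eqref{intro evo wcs} where $B$ is replaced by $B(x)$) that were used in the Barles--Souganidis argument for $W$ in the proof of Theorem \ref{the}, and then rerun that argument verbatim. Once these are in place, the half--relaxed semi--limits of the iterates $\mathcal W(t/2^n)^{2^n}\uu^0$ are respectively a subsolution and a supersolution of the evolutionary system, and the comparison principle of (the $x$--dependent analogue of) Proposition \ref{comparison} forces them to coincide with $S(t)\uu^0$.

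Monotonicity and constant--translation turn out to be cheaper than in the constant--$B$ case, because $\e^{-tB(x)}$ now appears only in front of the boundary term $\uu^0(\gamma(-t))$, and no integrated factor $\e^{sB}$ enters. For each fixed $x$ the matrix $B(x)$ is a coupling matrix, so the pointwise version of Proposition \ref{sto} and Corollary \ref{decroissant} gives that $\e^{-tB(x)}$ has nonnegative entries and $0\1\leqslant \e^{-tB(x)}\1 \leqslant \1$. Both required properties of $\mathcal W$ follow immediately and do not even involve the Lagrangians.

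The main computation is consistency. Fix $\Phi\in C^1(\T^N,\R^d)$, a point $x\in\T^N$, and an index $i\in\ind$, and set $u_{x,t}(y):=[\e^{-tB(x)}\Phi(y)]_i$. With $x$ frozen, the $i$--th component of $\mathcal W(t)\Phi(x)$ is simply the standard (untwisted) Lax--Oleinik semi--group associated to the single good Lagrangian $L_i$ applied to $u_{x,t}$:
$$[\mathcal W(t)\Phi(x)]_i = T_{L_i}^{t,0}u_{x,t}(x).$$
Since $u_{x,t}$ is Lipschitz in $y$ with a constant depending only on $\|D\Phi\|_\infty$ and a uniform bound on $B(\cdot)$, Theorem \ref{CS} produces a minimizing curve $\gamma_t$ whose Lipschitz constant is uniform in $x$ and in small $t$. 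Expanding $u_{x,t}(y) = \Phi_i(y) -t[B(x)\Phi(y)]_i+O(t^2)$ uniformly in $y$, and comparing $\gamma_t$ with the minimizer associated to $\Phi_i$ (both endpoints $\gamma_t(-t)$ and its analogue lie within $O(t)$ of $x$), one gets
$$T_{L_i}^{t,0}u_{x,t}(x)-T_{L_i}^{t,0}\Phi_i(x) = -t[B(x)\Phi(x)]_i+o(t).$$
Combining with the standard scalar consistency $T_{L_i}^{t,0}\Phi_i(x)-\Phi_i(x) = -tH_i(x,D\Phi_i(x))+o(t)$ yields
$$[\mathcal W(t)\Phi(x)-\Phi(x)]_i = -t\bigl(H_i(x,D\Phi_i(x))+[B(x)\Phi(x)]_i\bigr)+o(t),$$
with $o(t)$ uniform in $i$ and $x$ thanks to the uniform Lipschitz bound on the minimizers.

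From here the proof mimics that of Theorem \ref{the}: the Lipschitz case is handled by the half--relaxed limits argument, the only change being that the last line contains $\e^{-r_nB(x_n)}$ in place of $\e^{-r_nB}$, which is harmless since $\frac{\xi_n}{r_n}(1-(\e^{-r_nB(x_n)}\1)_i)\to 0$. The extension to merely continuous initial data is obtained as in Theorem \ref{C^0}, by approximation in sup norm and the $1$--Lipschitz property of $\mathcal W$. The only delicate step I expect is the consistency calculation above: because $B$ depends on $x$, the inner initial datum $u_{x,t}$ depends on both $x$ and $t$, and one must confirm that the two small perturbations (from the matrix exponential in $t$ and from evaluating $B$ at the basepoint $x$ rather than along the curve) do not combine into a first order error term; this is precisely what the uniform Lipschitz bound on minimizers guarantees.
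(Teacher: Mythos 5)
Your proposal is correct and does essentially what the paper intends: the paper omits the proof of this theorem, declaring it ``similar (even simpler with some respect)'' to that of Theorem~\ref{the}, i.e.\ verify monotonicity, the constant-translation property, and consistency, and then rerun the half--relaxed-limits argument of Barles--Souganidis. You check exactly these three properties. Monotonicity and translation for $\mathcal W$ reduce, as you observe, to the pointwise nonnegativity of $\e^{-tB(x)}$ and $\e^{-tB(x)}\1\leqslant\1$, which is indeed Proposition~\ref{sto} and Corollary~\ref{decroissant} applied to each fixed coupling matrix $B(x)$. Your consistency computation is valid: both inequalities follow by inserting the minimizer of one of the two problems $T_{L_i}^{t,0}u_{x,t}(x)$, $T_{L_i}^{t,0}\Phi_i(x)$ as a test curve for the other, the uniform ($x$- and small-$t$-independent) Lipschitz bound on minimizers guaranteeing $\gamma(-t)=x+O(t)$, so that the first-order expansion of $\e^{-tB(x)}$ together with $\Phi\big(\gamma(-t)\big)=\Phi(x)+O(t)$ yields the required $-t[B(x)\Phi(x)]_i+O(t^2)$ difference. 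This is arguably even more elementary than the proof of Proposition~\ref{consistency} for $W$, which went through Hamilton's equations and the adjoint momentum $p_t$; here no twisting of the Lagrangian by $\e^{sB}$ occurs inside the integral, so the crude two-sided estimate suffices.

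One comment worth making explicit: the paper's subsequent remark emphasizes that for $\mathcal W$ the relevant Lagrangians are \emph{autonomous}, so the Lipschitz estimate on minimizers can be obtained from conservation of energy (and Clarke--Vinter) under weaker hypotheses than the Nagumo condition used in Theorem~\ref{CS}. Your proof instead invokes Theorem~\ref{CS} directly, which is perfectly legitimate under the paper's standing ``good Lagrangian'' assumptions, but does not by itself deliver the promised relaxation of hypotheses. Also, note that you appeal to ``the $x$-dependent analogue of Proposition~\ref{comparison}''; the paper takes this comparison principle for granted when it allows $B(\cdot)$ to depend on $x$, so this is consistent with the paper's intent, but strictly speaking it is an ingredient you are assuming rather than proving.
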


\begin{oss}
Actually, the proof of consistency of this scheme is easier and the hypotheses on the Nagumo functions and derivatives of the Lagrangians are not even needed. Indeed, as the Lagrangians appearing in the operator $\mathcal W$ are autonomous (contrarily to the ones in $W$ that depend on time because of the exponential term), conservation of energy gives the Lipschitz estimates on minimizing curves by only assuming that each $L_i$ is continuous, convex in $p$ and superlinear.

However, this operator is less natural and does not enjoy the nice properties established for $W$. And it was not misused in literature.
\end{oss}
The last scheme we propose consists in only taking the first terms of the exponential term: 

\begin{definition}
Given a continuous initial condition $\uu^0$, we define the operator
$$
\mathfrak W(t)\uu^0 (x) = \inf_{\substack{\gamma : [-t,0]\to \T^N \\ \gamma(0)=x }} \big({\rm Id}_d-tB(x)\big)\uu^0\big(\gamma(-t)\big) + \int_{-t}^0  \L\big(\gamma(s),\dot\gamma(s)\big)\dd s.
$$
\end{definition}

\begin{teorema}\label{thebis}
Let $\uu^0:\T^N \to \R^d$ be a continuous function, then for any $t\geqslant 0$, the following holds:
$$S(t)\uu = \lim_{n\to +\infty} \mathfrak W(t/2^n)^{2^n}\uu.$$
\end{teorema}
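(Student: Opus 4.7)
The strategy is to reduce this statement to the previous theorem (the analogous one for the operator $\mathcal W$) by a quantitative comparison of the two operators. By Taylor expansion, the entries of $({\rm Id}_d - tB(x)) - \e^{-tB(x)}$ are $O(t^2)$ uniformly in $x\in\T^N$, and inspecting the two inf--formulas with the same family of curves yields
$$\|\mathfrak W(t)\uu - \mathcal W(t)\uu\|_\infty \leqslant C t^2 \|\uu\|_\infty$$
for any continuous $\uu$ and sufficiently small $t$.

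Both $\mathfrak W(t)$ and $\mathcal W(t)$ are $1$--Lipschitz for the sup norm when $t$ is small, since the matrices $\e^{-tB(x)}$ and $({\rm Id}_d - tB(x))$ have non--negative entries with $\e^{-tB(x)}\1 \leqslant \1$ and $({\rm Id}_d - tB(x))\1 \leqslant \1$ (the latter being a consequence of $B(x)\1 \geqslant 0$). Iterating with $s = t/2^n$ and $k = 2^n$, the $1$--Lipschitz property of $\mathcal W(s)$ gives
$$\|\mathfrak W(s)^k \uu^0 - \mathcal W(s)^k \uu^0\|_\infty \leqslant C s^2 \sum_{j=0}^{k-1} \|\mathfrak W(s)^j \uu^0\|_\infty.$$
A one--step a priori bound $\|\mathfrak W(s)\vv\|_\infty \leqslant (1+Cs)\|\vv\|_\infty + Cs$, obtained from $L_i \geqslant -c_0$ and the non--negativity of the matrix entries, yields by induction a uniform control $\|\mathfrak W(s)^j \uu^0\|_\infty \leqslant \e^{Ct}(\|\uu^0\|_\infty + Ct) =: M$ for $j \leqslant k$. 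Substituting produces a bound of order $CMt\cdot t/2^n$, which vanishes as $n\to+\infty$, and combined with $\mathcal W(t/2^n)^{2^n}\uu^0 \to S(t)\uu^0$ this yields the conclusion.

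The step I expect to require the most care is the uniform--in--$n$ a priori bound on the iterates $\mathfrak W(s)^j \uu^0$, which no longer benefits from any semi--group property and which must be made robust against the space--dependence of $B$. If this reduction runs into difficulty, one can instead mimic the Barles--Souganidis argument used for Theorem \ref{the}: monotonicity and the shift--by--constants property of $\mathfrak W(t)$ are immediate for $t$ small, and consistency $\mathfrak W(t)\Phi - \Phi = -t[\HH(\cdot, D\Phi) + B\Phi] + o(t)$ for $C^1$ $\Phi$ follows by recognising $[\mathfrak W(t)\Phi]_i(x) = T_{L_i}^t[\phi_i - t(B(x)\Phi)_i](x)$ as a single--equation Lax--Oleinik value with $t$--perturbed initial datum, applying Theorem \ref{CS} together with the superlinearity of $L_i$ (which provides uniform Lipschitz bounds on minimizers and hence $\gamma_t(-t)\to x$), and splitting off the perturbation via comparison with $T_{L_i}^t[\phi_i](x)$ to identify the $-[B(x)\Phi(x)]_i$ term.
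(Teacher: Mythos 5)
Your proposal is correct, and your first argument is a genuinely different route from what the paper intends. The paper explicitly omits the proof, saying only that it is ``similar (even simpler with some respect)'' to the Barles--Souganidis scheme used for Theorem \ref{the} — i.e.\ one is expected to re-verify monotonicity, the constant-shift property and consistency for $\mathfrak W$ directly, which is what your fallback paragraph does. Your primary argument instead reduces the $\mathfrak W$ statement to the already-established $\mathcal W$ statement by the quantitative comparison $\|\mathfrak W(t)\uu-\mathcal W(t)\uu\|_\infty\leqslant Ct^2\|\uu\|_\infty$ (coming from the uniform $O(t^2)$ gap between $\e^{-tB(x)}$ and ${\rm Id}_d-tB(x)$ on the compact $\T^N$), a telescoping sum controlled by the $1$-Lipschitz property of $\mathcal W(s)$, and a uniform a priori bound on the iterates $\mathfrak W(s)^j\uu^0$. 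This is cleaner: it avoids redoing the consistency computation for a second scheme, and the accumulated error $k\,Cs^2M=CMt^2/2^n$ transparently vanishes. Two small remarks. First, the a priori bound is slightly simpler than you state: since ${\rm Id}_d-sB(x)$ has non-negative entries with row sums $\leqslant 1$ once $s\leqslant 1/\sup_{x,i}b_{ii}(x)$, one gets directly $\|\mathfrak W(s)\vv\|_\infty\leqslant\|\vv\|_\infty+Cs$ (take $\gamma\equiv x$ for the upper bound, $\L\geqslant -c_0\1$ for the lower), so the multiplicative factor $(1+Cs)$ is not needed; your version is of course also correct and harmless. Second, the non-negativity of ${\rm Id}_d-sB(x)$ genuinely requires $s$ small — this is automatic in the iterated scheme where $s=t/2^n$, but is worth stating, since $\mathfrak W(t)$ is \emph{not} $1$-Lipschitz (nor monotone) for large $t$, unlike $W(t)$ and $\mathcal W(t)$.
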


\appendix
\section{An explicit computation}
We conclude this article by giving a very simple example showing $W$ does not provides the viscosity solution operator. For sake of simplicity and of nice formulas, we consider here a problem on $\R^N$.

We will consider the simple system with $\mathbb{H}=\left(\begin{array}{c}H_1\\H_2 \end{array}\right)$ where 
$H_1=H_2=\frac12 \|\cdot \|^2$ on $\R^N$ and $B=\left(\begin{array}{cc}
1&-1\\
-1&1
\end{array}    \right)$. It then holds that 
$$\forall t\in \R, \quad \e^{tB}=\left(\begin{array}{cc}
\frac{1+\e^{2t}}{2}&\frac{1-\e^{2t}}{2}\\
\frac{1-\e^{2t}}{2}&\frac{1+\e^{2t}}{2}
\end{array}    \right).$$

We will also make use of that fact that if $H : \R^n\to \R$ is independent of the first variable and if $p\in \R^n$, then the solution to 
\begin{equation}\label{equ}
\frac{\partial u}{\partial t} +H(D_x u)=0
\end{equation}
with initial condition $u(0,x) = \langle p,x \rangle$ is given by 
\begin{equation}\label{formule simple}
u(t,x) =  -tH(p)+\langle p,x\rangle. 
\end{equation}
We now proceed to computing
\begin{align}
W(t)\uu^0 (x) &= \inf_{\substack{\gamma : [-t,0]\to \T^N \\ \gamma(0)=x }} \e^{-tB}\uu^0\big(\gamma(-t)\big) + \int_{-t}^0 \e^{sB} \L\big(\gamma(s),\dot\gamma(s)\big)\dd s \nonumber\\
&=\inf_{\substack{\gamma : [-t,0]\to \T^N \\ \gamma(0)=x }} \e^{-tB}\uu^0\big(\gamma(-t)\big) + \int_{-t}^0  \L\big(\gamma(s),\dot\gamma(s)\big)\dd s.  \label{formule}
\end{align}
As there is no exponential term in the integral, in formula \eqref{formule}, we recognize there a classical Lax--Oleinik formula and we can interpret that both lines of $W(t)\uu^0(x)$ are respectively solutions at time $t$ of the simple Hamilton--Jacobi equation \eqref{equ} with initial conditions, respectively the  entries of $\e^{-tB}\uu^0$.
 It follows that to compute $W(t)$ we have to compute the solution, at time $t$, of two classical Hamilton-Jacobi equations, with initial conditions given by the entries of $\e^{-tB}\uu^0$.

In our case, we take $\uu^0 (x) =\left( \begin{array}{c}0\\ \langle p,x\rangle\end{array}\right)$ therefore, $\e^{-tB}\uu^0(x) = \langle p,x\rangle\left( \begin{array}{c}\frac{1-\e^{2t}}{2}\\ \frac{1+\e^{2t}}{2}\end{array}\right)$. 

We deduce from \eqref{formule simple} that 
$$\uu(t,x):=W(t)\uu^0(x)=\frac{\langle p,x\rangle}{2}\left( \begin{array}{c}1-\e^{-2t}\\ 1+\e^{-2t}\end{array}\right)-\frac {t\|p\|^2}{8} \left( \begin{array}{c}(1-\e^{-2t})^2 \\ ( 1+\e^{-2t})^2 
\end{array}\right).$$
To conclude, we compute that
\begin{align*}
\frac{\partial \uu}{\partial t} +\mathbb H(D_x\uu)+B(\uu) &= \e^{-2t}\langle p,x\rangle \left( \begin{array}{c}1 \\- 1 
\end{array}\right) 
 -\frac {\|p\|^2}{8} \left( \begin{array}{c}(1-\e^{-2t})^2 \\ ( 1+\e^{-2t})^2 
\end{array}\right)  \\
&\qquad  +\frac {t\|p\|^2}{2} \left( \begin{array}{c}-(1-\e^{-2t})\e^{-2t} \\\e^{-2t} ( 1+\e^{-2t}) 
\end{array}\right)+ \frac {\|p\|^2}{8} \left( \begin{array}{c}(1-\e^{-2t})^2 \\ ( 1+\e^{-2t})^2 
\end{array}\right) \\
&\qquad 
+\frac{\langle p,x\rangle}{2}
\left(\begin{array}{cc}
1&-1\\
-1&1
\end{array}    \right)
\left( \begin{array}{c}1-\e^{-2t}\\ 1+\e^{-2t}\end{array}\right)\\
&\qquad -\frac {t\|p\|^2}{8}
\left(\begin{array}{cc}
1&-1\\
-1&1
\end{array}    \right)
\left( \begin{array}{c}(1-\e^{-2t})^2 \\ ( 1+\e^{-2t})^2 
\end{array}\right) \\
&=  \frac {t\|p\|^2\e^{-4t}}{2} \left( \begin{array}{c}1\\-1 \end{array}\right) \neq 0\1 .
\end{align*}
Hence $\uu$ is not a solution to the Hamilton--Jacobi system.

\bibliography{weakly}
\bibliographystyle{siam}

\end{document}